\documentclass[pdflatex,sn-mathphys-num]{sn-jnl}


\usepackage{graphicx}%
\usepackage{multirow}%
\usepackage{amsmath,amssymb,amsfonts}%
\usepackage{amsthm}%
\usepackage{mathrsfs}%
\usepackage[title]{appendix}%
\usepackage{xcolor}%
\usepackage{textcomp}%
\usepackage{manyfoot}%
\usepackage{booktabs}%
\usepackage{algorithm}%
\usepackage{algorithmicx}%
\usepackage{algpseudocode}%
\usepackage{listings}%
\usepackage[all]{xy}
\usepackage{enumitem}


\theoremstyle{thmstyleone}%
\newtheorem{theorem}{Theorem}
\newtheorem{proposition}[theorem]{Proposition}%

\newtheorem{corollary}[theorem]{Corollary}
\theoremstyle{thmstyletwo}%
\newtheorem{example}{Example}%
\newtheorem{remark}{Remark}%

\theoremstyle{thmstylethree}%
\newtheorem{definition}{Definition}%

\raggedbottom

\def\br#1\er{\textcolor{red}{#1}}

\newcommand{\Ten}{\mathcal{T}}
\newcommand{\Fun}{\mathcal{F}}
\newcommand{\Can}{\mathbb{C}}
\newcommand{\vd}{\dot{\partial}}
\newcommand{\Ker}{\mathrm{Ker}}
\newcommand{\Img}{\mathrm{Img}}
\newcommand{\dd}{\mathrm{d}}
\newcommand{\TT}{\mathrm{T}}

\newcommand{\wt}{\widetilde}

\newcommand{\y}{y}
\newcommand{\HH}{\mathrm{H}}
\newcommand{\VV}{\mathrm{V}}

\begin{document}

\title[Ladder of Finsler-type objects on spacetimes]{The ladder of Finsler-type objects and their variational problems on spacetimes}


\author[1]{\fnm{Miguel} \sur{Sánchez}}\email{sanchezm@ugr.es}

\author*[1]{\fnm{Fidel F.} \sur{Villaseñor}}\email{fidelfv@ugr.es}
\equalcont{These authors contributed equally to this work.}


\affil[1]{\orgdiv{Departamento de Geometría y Topología / IMAG}, \orgname{Universidad de Granada}, \orgaddress{\street{Av. de la Funete Nueva s/n}, \postcode{18071 Granada}, \country{Spain}}}

%


\abstract{The space of anisotropic $r$-contravariant $s$-covariant  
	$\alpha$-homogeneous tensors on a manifold admits a functorial structure where vertical derivatives $\dot \partial$ and contractions $\imath_{\Can}$ by the Liouville  vector field $\mathbb{C}$ are operators which maintain $s+\alpha$ constant. In  (semi-)Finsler   geometry,   this structure is transmitted faithfully to connection-type elements yielding   the following   ladder: geodesic sprays / nonlinear connections / anisotropic connections / linear (Finslerian) connections. However, it is more loosely transmitted to metric-type ones: Finslerian Lagrangians / Legendre transformations / anisotropic metrics. 
	
	We will study this structure in depth and apply it to discuss the recent variational proposals (Einstein-Hilbert, Einstein-Palatini, Einstein-Cartan) for  generalizing  Einstein equations to the Finsler  setting. }

\keywords{Anisotropic calculus, Finsler and anisotropic connections,  Einstein-Finsler equations, Finsler gravity}



\maketitle

\tableofcontents

\section{Introduction} \label{introduction}

It is well known that a (2-homogeneous) semi-Finsler Lagrangian\footnote{Consistently with the extension of the name \emph{Riemannian} to  \emph{semi-Riemannian} \cite{ON}, we will use \emph{semi-Finsler} to stress   regularity  of the Lagrangian (nondegeneracy of its vertical Hessian), and \emph{pseudo-Finsler} when this condition is dropped. Contrary to some of our previous works \cite{JSV1,JSV2}, here we do not use \emph{metric} to refer to the function $L$, since we want to clarify the difference with the anisotropic metrics of Definition \ref{anis_met}.  }  $L$ (with $L=F^2$ at least in the positive definite case) yields as a series of structures with increasing complexity: a geodesic spray $G$ associated with the curves that are extremal for the energy functional, a nonlinear connection $N$ 
determined by $G$ and a set of   linear (Finsler)   connections (Berwald, Chern, Cartan, Hashiguchi...) constructed on the   vertical   bundle by using $N$ and other elements of $L$. This Lagrangian also determines a set of anisotropic connections, each one, $\nabla$, illustrating the intuitive idea of having an affine connection $\nabla^V$ for the oriented directions determined by a nonvanishing vector field $V$. Anisotropic connections   go back at least to   \cite{Mts} but they do not belong to the mainstream Finslerian setting. Recently, they were studied systematically by M. Á. Javaloyes \cite{Jav19} and they were identified  as {\em vertically trivial} Finsler connections in \cite{JSV1} (a contribution to the previous Lorentzian meeting).  

This permits to construct a {\em ladder of connection-type  structures}:
\begin{quote}
	linear (Finsler) con. $\hat{\nabla}$  $\hookleftarrow$
	anisotropic con. $\nabla$
	$\hookleftarrow$ 
	nonlinear con. $N$ $\hookleftarrow$
	geodesic spray $G$.
\end{quote} 
In this ladder, each step has an infinite dimensional affine structure on a vector space of anisotropic tensors. Using the canonical coordinates $(x,y)$ of the tangent bundle, each level also has has a natural expression with a  cocycle transformation  as well as a natural degree of (positive) homogeneity in $y$. The step at its right increases in one the order  of  homogeneity and  decreases in one its order of covariance.    (Except for the transition between $\hat{\nabla}$ and $\nabla$, which, here and below, works in a different but formally analogous way.)    

It is possible to move from any step to the next one at the left by vertically differentiating, which implies a canonical choice in the new step.  It is also possible to move to the step at the  right by contracting with the Liouville vector field $\mathbb{C}$, which implies  dropping  some information. This dropped information is the difference between an element of the ladder and the canonical one obtained by moving once to the right and once to the left. This difference  will be called {\em residual}, but this name does not mean that such an information is not relevant.   Indeed, for example, the residue of a nonlinear connection is its torsion, and the difference between the Chern and Berwald anisotropic connections, i.e. the Landsberg tensor, 
 is also residual.     

All the previous objects (sprays and different sorts of connections) can be defined with independence of any Lagrangian $L$. However, $L$ also suggests a   potential   {\em ladder of metric-type structures}, 
\begin{quote} 
	anisotropic metric  $g(=g_{(x,[y])})$
	$\hookleftarrow$ Legendre transformation  $\ell$
	$\hookleftarrow  $ 	Finsler Lagrangian  $L$ 
\end{quote}    
 Recall that a semi-Riemannian metric  $g=g_x$, $x\in M$, is a nondegenerate scalar product depending smoothly on $x$; this includes  Riemannian (positive definite) and Lorentzian (index one) metrics.   Meanwhile,   an anisotropic 
metric $g=g_{(x,[y])}$ is  a nondegenerate  scalar product which depends smoothly not only on the 
point $x$, but also on the oriented direction $[y]=\mathbb{R}^+\cdot  y$. (See Definitions \ref{Legendre} and \ref{anis_met} about our notions related to Legendre transformations.)
In this ladder, one can move to the left   almost   trivially; however, there will not be a natural way to move to the right, at least if the  nondegeneracy of the metrics is to be preserved.

In the present article,  
functorial transitions between the different geometric objects involved in the  semi-Finsler setting will be analyzed in detail. As an application, we will discuss the different variational approaches to properly generalize the Einstein equations. 

We start by considering the ladder of  $\alpha$-homogeneous, $s$-covariant, $r$-contravariant tensors in Section \ref{section tensors}. The contraction with the Lioville vector $\imath_{\Can}$ and the vertical  derivative  $\dot\partial$ are natural (functorial)  transformations that preserve $\alpha+s$ and satisfy a simple rule (Proposition \ref{main prop}), which permits the construction of the ladder (Definition~\ref{def ladder}) and residues (Definition \ref{residues}) as well as the systematic computation of the latter (Theorem \ref{main th}, Remark \ref{synthesis}).

In Section \ref{section metrics}, we   analyze the situation for the metric-type tensors. We point out that symmetry and regularity conditions need not be preserved when moving on the ladder labeled by $(r=0,\omega=2)$. With a detailed example, we also see that even if the nondegeneracy condition is respected, the signature of the metrics may change (Example \ref{example regularity}). This prevents us from constructing a ladder with transitions as satisfactory of those of general tensors.  

In Section \ref{section connections},  we analyze the ladder for connection-type objects which live in a conic subset $A$ of $\TT M$ (geodesic spray, nonlinear connection, anisotropic conection), which correspond to the case $(r=1, \omega=2)$.   We do so by extending $\imath_{\Can}$ and $\vd$ to act on these objects, by means of a systematic procedure that highlights their transformation cocycles (Proposition \ref{prop connections} and Corollary \ref{corollary connections}).   It is worth pointing out that, when the elements of this ladder are associated  with a Finsler   Lagrangian   $L$, the natural inclusions starting at the   corresponding   spray yield the canonical nonlinear connection and the Berwald anisotropic one.   (The Chern connection would appear in a different way, for example by defining a new inclusion $\{$ nonlinear con.  $\}\hookrightarrow_L$ $ \{ $ anisotropic con. $\}$ that makes explicit use of the Landsberg tensor.)   

In Section \ref{section linear},   the aforementioned   ladder is extended to linear connections   that   live on the vertical bundle $\VV A \subset \VV(\TT M)$   (or, equiv., on the pullback bundle $\pi_A^*(\TT M)$).   This last step is subtler because such a connection $\hat \nabla$ does  not project directly onto an anisotropic one, but onto a nonlinear   one $N=N^{\hat{\nabla}}$ when a regularity condition is imposed.     Now, all the linear connections  projecting on the same $N$ admit a natural decomposition into an anisotropic connection $\nabla$ and a residue (Theorem \ref{cor linear intrinsic}), thus completing the ladder. 

Finally, in Section \ref{variational problems}, we discuss the different variational approaches to generalize (vacuum) Einstein equations considered so far.   This includes the Einstein-Hilbert one by Pfeifer, Wohlfart, Hohmann \& Voicu (PWHV) \cite{PW,HPV}, the Einstein-Palatini one by Javaloyes, S\'anchez  $\&$  Villaseñor (JSV) \cite{JSV2} and the anisotropic metric one by García-Parrado   $\&$ Minguzzi  (GM) \cite{GPMin}.  
The variational approach requires a metric-type object to define a volume element and, eventually, carry out contractions. Then, to construct the action functional, one  must choose the involved level of the   (extended)   connection-type ladder and whether it is the one associated with the metric object or not.   PWHV consider the  Lorentz-Finsler Lagrangian   $L$ to compute volumes as well as to obtain the nonlinear connection $N$ and the Ricci scalar. The equation is the variational completion of   $\mathrm{Ric}^L=0$ and depends explicitly on  the Landsberg tensor $\mathrm{Lan}$. The    JSV approach considers $N$ as independent on $L$ and, then, states equations for both $N$ and $L$. Even though this gives more freedom and variety of solutions, strong uniqueness results for $N$ are obtained under mild conditions. However,   the canonical connection of $L$    is shown  to be {\em not} a solution of the metric-affine equations when $\mathrm{Lan}\neq 0$.   The GM   approach uses the highest level $\hat \nabla$ to define the action.   In order to enable systematic comparisons between these and other theories, we state Theorem \ref{functionals} and Remark \ref{functionals otro}. These results allow one to take any Lagrangian density for one kind of connection-type objects and naturally redefine it to make sense for any other kind.  Thus, the choice of levels and variables in the semi-Finsler ladder yields a much bigger variety of possibilities than in the semi-Riemannian case,  yielding dramatically different theories of gravity. We illustrate this with GM's results, where the kind of objects and variations is so demanding   that even a non-quadratic  Lorentzian norm cannot be a vacuum solution.

\section{The ladder structure of anisotropic tensors} \label{section tensors}

\subsection{Preliminaries and conventions} \label{preliminaries}

First, we introduce notions and notation standard in semi-Finsler geometry and, more generally, anisotropic tensor calculus; see e.g. \cite{Jav19} for background. Thus, let $M$ be a smooth $n$-dimensional manifold\footnote{We take this to be Hausdorff and second countable, e.g.   in order to have the existence of globally defined sections of the affine bundles of \S \ref{section connections}.   By \emph{smooth}, we mean $\mathcal{C}^m$ with $m$ large enough that all the derivatives that we consider of anisotropic tensors exist and are continuous on the set $A$. Al objects will be assumed to be smooth unless stated otherwise.} and $A\subseteq\TT M\setminus\mathbf{0}$ be an open subset all of whose fibers $A_p:=A\cap\TT_p M$ (for $p\in M$) are nonempty and conic (i.e., if $v\in A_p$, then $\lambda v\in A_p$ for all $\lambda\in\mathbb{R}^+$). 

As in previous occasions \cite{JSV1,JSV2}, we shall work with $A$-anisotropic tensor fields on $M$, or, for short, \emph{anisotropic tensors}, writing $\Ten^r_s(M_A)$ for the set of all of these of type $(r,s)$, where $r,s\in\mathbb{N}\cup\left\{0\right\}$. Recall that 
\[
\Ten^r_s(M_A)=\left\{\text{sections of }\pi^*_A(\TT M\otimes\overset{r)}{\ldots}\otimes\TT M\otimes \TT^\ast M\otimes\stackrel{s)}{\ldots}\otimes\TT^\ast M)\longrightarrow A\right\},
\]
where $\pi_A$ is the restriction to $A$ of the natural projection $\TT M\rightarrow M$. The readers more familiar with the vertical bundle formalism \cite[\S 4.1.3]{KLS} may think that $\VV A\equiv\pi^*_A(\TT M)$ as bundles over $A$, via the isomorphism given fiberwise by 
\begin{equation}
	\TT_{\pi(v)}M\ni w\equiv \left.\frac{\dd}{\dd t}(v+tw)\right|_0\in\VV_v A.
	\label{vertical isomorphism}
\end{equation}
This way, $\pi^*_A(\TT M\otimes\overset{r)}{\ldots}\otimes\TT M\otimes \TT^\ast M\otimes\stackrel{s)}{\ldots}\otimes\TT^\ast M)$ is naturally a tensor product of copies of $\VV A$ and its dual. When one has the additional datum of a complementary horizontal bundle $\HH A\subset\TT A$, there is also an isomorphism $\HH A\equiv\pi^*_A(\TT M)$, but we wish to make our constructions independent of any kind of connection. (For the possibilities for this, see \cite{JSV1}, whose results we will refine in \S \ref{section connections}.) This is also the reason for not considering d-tensors \cite[\S 2.5]{BM}. We shall use the special notation $\Fun(A):=\Ten^0_0(M_A)$ for the set of functions defined on $A$ and consider $\Ten^r_s(M)\subset\Ten^r_s(M_A)$ by identifying each section $T\colon M\rightarrow \TT M\otimes\overset{r)}{\ldots}\otimes\TT M\otimes \TT^\ast M\otimes\stackrel{s)}{\ldots}\otimes\TT^\ast M$ with $T\circ\pi_A\in\Ten^r_s(M_A)$.

Now, let $\alpha\in\mathbb{R}$ be given. By $\mathrm{h}_\alpha\Ten^r_s(M_A)\subset\Ten^r_s(M_A)$, we will denote the set of those anisotropic tensors that are (positively) homogeneous of degree $\alpha$, or, for short, \emph{$\alpha$-homogeneous}. That is (always in natural coordinates $\left(x^i,\y^i\right)$ associated with arbitrary coordinates $\left(x^i\right)$ on $M$, and with the Einstein convention in the Latin indices), those 
\[
T=T^{i_1\ldots i_r}_{j_1\ldots j_s}\,\partial_{x^{i_1}}\otimes\ldots\otimes\partial_{x^{i_r}}\otimes\dd x^{j_1}\otimes\ldots\dd x^{j_s}\in\Ten^r_s(M_A)
\]
 whose components satisfy that, whenever $\lambda\in\mathbb{R}^+$,
\[
T^{i_1\ldots i_r}_{j_1\ldots j_s}(x,\lambda\y)=\lambda^\alpha\,T^{i_1\ldots i_r}_{j_1\ldots j_s}(x,\y).
\]
(Writing $\mathrm{h}_\alpha\Ten^r_s(M_A)$ for the set of all these $T$'s and not $\mathrm{h}^\alpha\Ten^r_s(M_A)$, as in \cite{JSV2}, is intentional; we will comment on the intuition for this in the third item of Rem. \ref{synthesis}.) As the main example, the Liouville (or canonical) anisotropic vector field is 
\[
\Can\in\mathrm{h}_1\Ten^1_0(M_A),\qquad\Can_v:=v
\]
for $v\in A$ (indeed, $\Can=\Can^i\,\partial_{x^i}=\y^i\,\partial_{x^i}$). The vertical derivative operator acting on $T\in\Ten^r_s(M_A)$ is given by 
\[
\vd T=T^{i_1\ldots i_r}_{j_1\ldots j_s\,\cdot k}\,\partial_{x^{i_1}}\otimes\ldots\otimes\partial_{x^{i_r}}\otimes\dd x^{j_1}\otimes\ldots\dd x^{j_s}\otimes\dd x^k\in\Ten^r_{s+1}(M_A).
\]
Here, we have agreed that the index of derivation will be the last covariant one of the resulting tensor and we have introduced the standard notation
\[
T^{i_1\ldots i_r}_{j_1\ldots j_s\,\cdot k}:=\frac{\partial T^{i_1\ldots i_r}_{j_1\ldots j_s}}{\partial\y^k}.
\]
 In the same vein, let us define the operator $\imath_{\Can}$ to be the interior product (contraction) with the canonical field of any $S\in\Ten^r_{s+1}(M_A)$ on its last index:
\[
\imath_{\Can} S:=S(-,\ldots,-,\Can)= S^{i_1\ldots i_r}_{j_1\ldots j_s a}\y^a\,\partial_{x^{i_1}}\otimes\ldots\otimes\partial_{x^{i_r}}\otimes\dd x^{j_1}\otimes\ldots\dd x^{j_s}\in\Ten^r_{s}(M_A).
\]
Just by inspection of the components, one sees that if $T\in\mathrm{h}_\alpha\Ten^r_s(M_A)$, then $\vd T\in\mathrm{h}_{\alpha-1}\Ten^r_{s+1}(M_A)$, and if $S\in\mathrm{h}_{\alpha-1}\Ten^r_{s+1}(M_A)$, then $\imath_{\Can} S\in\mathrm{h}_\alpha\Ten^r_s(M_A)$.
Finally, Euler's well-known theorem on homogenous functions \cite[Th. 1.2.1]{BCS} will play a key role. In our notation, it states that $T\in\mathrm{h}_\alpha\Ten^r_s(M_A)$ if and only if 
\begin{equation}
	T^{i_1\ldots i_r}_{j_1\ldots j_s\,\cdot a}\y^a=\alpha\,T^{i_1\ldots i_r}_{j_1\ldots j_s}.
	\label{euler}
\end{equation}

\begin{remark} \label{conventions}
	By the above comments, the restrictions
	\[
	\left.\imath_{\Can}\right|_{\mathrm{h}_{\alpha-1}\Ten_{s+1}^r(M_A)}\colon \mathrm{h}_{\alpha-1}\Ten_{s+1}^r(M_A)\longrightarrow \mathrm{h}_{\alpha}\Ten_s^r(M_A),
	\]
	\[
	\left.\vd\right|_{\mathrm{h}_{\alpha}\Ten_{s}^r(M_A)}\colon\mathrm{h}_{\alpha}\Ten_s^r(M_A)\longrightarrow\mathrm{h}_{\alpha-1}\Ten_{s+1}^r(M_A)
	\]
	are well-defined. Whenever it is convenient and the tensors' type is understood from the context, we shall use the abbreviations
	\[
	\overset{\alpha}{\imath_{\Can}}:=\left.\imath_{\Can}\right|_{\mathrm{h}_{\alpha-1}\Ten_{s+1}^r(M_A)}, \qquad \underset{\alpha}{\vd}:=\left.\vd\right|_{\mathrm{h}_{\alpha}\Ten_{s}^r(M_A)}. 
	\]
	In any case, we stress our convention that the two operators act on the last argument of any $S\in\mathrm{h}_{\alpha-1}\Ten_{s+1}^r(M_A)$ or $T\in\mathrm{h}_{\alpha}\Ten_s^r(M_A)$, respectively.
\end{remark}

\subsection{Construction of the ladder} \label{section ladder}

The basic result for our setup is the following.

\begin{proposition} \label{main prop}
	For each $\alpha\in\mathbb{R}$, one has that\footnote{Recall that there is nothing of the sort of Einstein summation holding for $\alpha$, only for the indices $i$, $j$, $k$... Still, we find the notations $\overset{\alpha}{\imath_{\Can}}$ and $\underset{\alpha}{\vd}$ to be the most suggestive ones for the algebraic structure that we are introducing on $\underset{r,s}{\bigoplus}\underset{\alpha}{\bigoplus}\mathrm{h}_\alpha\Ten^r_s(M_A)$, see e.g. \eqref{ladder}.}
	\begin{equation}
		\overset{\alpha}{\imath_{\Can}}\circ\underset{\alpha}{\vd}=\alpha\,\mathrm{Id}.
		\label{alg_euler}
	\end{equation}
	As consequences, if $\alpha\neq 0$, then  $\overset{\alpha}{\imath_{\Can}}\colon\mathrm{h}_{\alpha-1}\Ten^r_{s+1}(M_A)\rightarrow \mathrm{h}_{\alpha}\Ten^r_{s}(M_A)$ is surjective, $\underset{\alpha}{\vd}\colon\mathrm{h}_{\alpha}\Ten^r_{s}(M_A)\rightarrow\mathrm{h}_{\alpha-1}\Ten^r_{s+1}(M_A)$ is injective and 
	\begin{equation}
		\mathrm{h}_{\alpha-1}\Ten^r_{s+1}(M_A)=\Img(\underset{\alpha}{\vd})\oplus\Ker(\overset{\alpha}{\imath_{\Can}}),
		\label{decomposition}
	\end{equation}
	with the corresponding projections being given by
	\begin{equation}
		\mathrm{h}_{\alpha-1}\Ten^r_{s+1}(M_A)\longrightarrow\Img(\left.\vd\right|_{\mathrm{h}_{\alpha}\Ten^r_{s}(M_A)}), \qquad S\longmapsto\vd(\frac{1}{\alpha}\,\imath_{\Can}S);
		\label{proj img}
	\end{equation}
	\begin{equation}
	\mathrm{h}_{\alpha-1}\Ten^r_{s+1}(M_A)\longrightarrow\Ker(\left.\imath_{\Can}\right|_{\mathrm{h}_{\alpha-1}\Ten^r_{s+1}(M_A)}), \qquad S\longmapsto S-\vd(\frac{1}{\alpha}\,\imath_{\Can}S).
	\label{proj ker}
	\end{equation}
\end{proposition}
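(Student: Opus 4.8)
The plan is to reduce everything to the single operator identity \eqref{alg_euler}, which is nothing but a coordinate-free repackaging of Euler's theorem \eqref{euler}; once it is in hand, surjectivity, injectivity, and the splitting \eqref{decomposition} all follow by elementary linear algebra for a pair of maps whose composite is a nonzero scalar multiple of the identity.

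First I would establish \eqref{alg_euler} by a direct component computation. For $T\in\mathrm{h}_\alpha\Ten^r_s(M_A)$, the tensor $\underset{\alpha}{\vd}T$ has components $T^{i_1\ldots i_r}_{j_1\ldots j_s\,\cdot k}$, and contracting the freshly created last covariant slot against $\Can=\y^a\,\partial_{x^a}$ yields the components $T^{i_1\ldots i_r}_{j_1\ldots j_s\,\cdot a}\y^a$. By \eqref{euler} these equal $\alpha\,T^{i_1\ldots i_r}_{j_1\ldots j_s}$, so $\overset{\alpha}{\imath_{\Can}}(\underset{\alpha}{\vd}T)=\alpha\,T$. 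This is the only step where homogeneity is actually used, and it confirms that the conventions (both operators acting on the last slot) make the two restrictions compose as claimed.

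Granting $\alpha\neq 0$, surjectivity of $\overset{\alpha}{\imath_{\Can}}$ and injectivity of $\underset{\alpha}{\vd}$ are immediate from \eqref{alg_euler}: every $T$ is exhibited as $\overset{\alpha}{\imath_{\Can}}\bigl(\tfrac{1}{\alpha}\,\underset{\alpha}{\vd}T\bigr)$, while $\underset{\alpha}{\vd}T=0$ forces $\alpha\,T=\overset{\alpha}{\imath_{\Can}}(\underset{\alpha}{\vd}T)=0$, hence $T=0$. For the decomposition I would introduce the endomorphism $P:=\underset{\alpha}{\vd}\circ\bigl(\tfrac{1}{\alpha}\,\overset{\alpha}{\imath_{\Can}}\bigr)$ of $\mathrm{h}_{\alpha-1}\Ten^r_{s+1}(M_A)$, which is exactly the candidate projector \eqref{proj img}, and verify $P^2=P$ using \eqref{alg_euler}. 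The complementary map $\mathrm{Id}-P$ is then \eqref{proj ker}, and any idempotent splits its domain as $\Img P\oplus\Ker P$.

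The only point requiring a little care --- and the closest thing to an obstacle --- is identifying these summands with the intended ones, that is, checking $\Img P=\Img(\underset{\alpha}{\vd})$ and $\Ker P=\Ker(\overset{\alpha}{\imath_{\Can}})$ as equalities rather than mere inclusions. The inclusion $\Img P\subseteq\Img(\underset{\alpha}{\vd})$ is clear, and the reverse follows since $P(\underset{\alpha}{\vd}v)=\underset{\alpha}{\vd}v$ by \eqref{alg_euler}; for the kernels one invokes the already-established injectivity of $\underset{\alpha}{\vd}$ to cancel it from $\underset{\alpha}{\vd}\bigl(\tfrac{1}{\alpha}\,\overset{\alpha}{\imath_{\Can}}S\bigr)=0$. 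Beyond this bookkeeping the argument is purely formal.
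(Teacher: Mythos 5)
Your proof is correct and takes essentially the same route as the paper: both reduce everything to \eqref{alg_euler} as a coordinate repackaging of Euler's theorem \eqref{euler}, deduce surjectivity/injectivity from the one-sided inverse relation, and split $\mathrm{h}_{\alpha-1}\Ten^r_{s+1}(M_A)$ via the projector $S\longmapsto\vd(\tfrac{1}{\alpha}\,\imath_{\Can}S)$. The only cosmetic difference is packaging: you invoke the general idempotent splitting $\Img P\oplus\Ker P$ and then identify the summands, whereas the paper argues uniqueness directly by applying $\overset{\alpha}{\imath_{\Can}}$ to an arbitrary decomposition $S=\vd T+R$ and verifies that the complementary piece lies in $\Ker(\overset{\alpha}{\imath_{\Can}})$ via the identity $\overset{\alpha}{\imath_{\Can}}\circ\bigl(\alpha\,\mathrm{Id}-\underset{\alpha}{\vd}\circ\overset{\alpha}{\imath_{\Can}}\bigr)=0$ --- the same computations in different clothing.
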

\begin{proof}
	The identity \eqref{alg_euler} is just Euler's theorem \eqref{euler} taking into account Rem. \ref{conventions}, namely our conventions for $\overset{\alpha}{\imath_{\Can}}$ and $\underset{\alpha}{\vd}$. Hence, when $\alpha\neq 0$, these are one-sided inverses of each other up to a multiplicative constant, so the former must be surjective, and the latter, injective. Moreover, by composing \eqref{alg_euler} with $\overset{\alpha}{\imath_{\Can}}$ on the right, one obtains that
	\begin{equation}
		\overset{\alpha}{\imath_{\Can}}\circ\left( \alpha\,\mathrm{Id}-\underset{\alpha}{\vd}\circ\overset{\alpha}{\imath_{\Can}}\right)=0.
		\label{alg_euler conseq}
	\end{equation}
	
	For $S\in	\mathrm{h}_{\alpha-1}\Ten^r_{s+1}(M_A)$, suppose that $S=\vd T+R$, where $T\in\mathrm{h}_{\alpha}\Ten^r_{s}(M_A)$ and $R\in\Ker(\left.\imath_{\Can}\right|_{\mathrm{h}_{\alpha-1}\Ten^r_{s+1}(M_A)})$. Then, applying \eqref{alg_euler} to $S$,
	\[
	\overset{\alpha}{\imath_{\Can}} S=\overset{\alpha}{\imath_{\Can}}\underset{\alpha}{\vd} T+\overset{\alpha}{\imath_{\Can}}R=\alpha T,
	\]
	which shows that the components of $S$ in $\Img(\left.\vd\right|_{\mathrm{h}_{\alpha}\Ten^r_{s}(M_A)})$ and $\Ker(\left.\imath_{\Can}\right|_{\mathrm{h}_{\alpha-1}\Ten^r_{s+1}(M_A)})$ must necessarily be given by \eqref{proj img} and \eqref{proj ker}, resp. Now, trivially, indeed $\vd(\frac{1}{\alpha}\,\imath_{\Can}S)\in\Img(\left.\vd\right|_{\mathrm{h}_{\alpha}\Ten^r_{s}(M_A)})$ and $S=\vd(\frac{1}{\alpha}\,\imath_{\Can}S)+\left\{S-\vd(\frac{1}{\alpha}\,\imath_{\Can}S)\right\}$; that $S-\vd(\frac{1}{\alpha}\,\imath_{\Can}S)\in\Ker(\left.\imath_{\Can}\right|_{\mathrm{h}_{\alpha-1}\Ten^r_{s+1}(M_A)})$ follows from \eqref{alg_euler conseq}.
\end{proof}

There is quite a visual way of organizing the information obtained from successively applying Prop. \ref{main prop}:
\begin{definition} \label{def ladder}
	Let $\omega\in\mathbb{N}\cup\left\{0\right\}$. The \emph{ladder of $A$-anisotropic tensors labeled by $(r,\omega)$} is the double sequence of maps
	\begin{equation}
		\xymatrix{ 
			  \mathrm{h}_{0}\Ten^r_\omega\ar@/^/[r]^{\;\;\overset{1}{\imath_{\Can}}} & \;\ldots\;\ar@/^/[l]^{\;\;\underset{1}{\vd}}\ar@/^/[r] & \mathrm{h}_{\omega-2}\Ten^r_2\ar@/^/[l]\ar@/^/[r]^{\overset{\omega-1}{\imath_{\Can}}} &  \mathrm{h}_{\omega-1}\Ten^r_1\ar@/^/[l]^{\underset{\omega-1}{\vd}}\ar@/^/[r]^{\overset{\omega}{\imath_{\Can}}} & \mathrm{h}_{\omega}\Ten^r_0\ar@/^/[l]^{\;\;\underset{\omega}{\vd}}.}
		\label{ladder}
	\end{equation}
\end{definition}
\begin{remark}
	\,
	\begin{enumerate}
		\item We have omitted the $M_A$ from the notation of \eqref{ladder} in order to stress its functorial and natural character. Anyway, we shall not delve in these matters, so in all our reasonings we think of the fibered manifold $A\rightarrow M$ as fixed.
		
		\item By construction, $\overset{0}{\imath_{\Can}}$ and $\underset{0}{\vd}$ do not appear in our ladders, so each $\overset{\alpha}{\imath_{\Can}}$ in \eqref{ladder} is an epimorphism, and each $\underset{\alpha}{\vd}$, a monomorphism, say, of $\Fun(M)$-algebras (as $\underset{\alpha}{\vd}$ would be Leibnizian and not linear for functions defined on $A$).
		
		\item Had we allowed $\omega\in\mathbb{R}\setminus\left(\mathbb{N}\cup\left\{0\right\}\right)$, then the ladder would have been prolonged infinitely to the left:
		\[
		\xymatrix{ 
			 \;\ldots\;\ar@/^/[r]^{\overset{\omega-s}{\imath_{\Can}}\;\quad}&\mathrm{h}_{\omega-s}\Ten^r_s\ar@/^/[l]^{\underset{\omega-s}{\vd}\quad}\ar@/^/[r]^{\quad\overset{\omega-s+1}{\imath_{\Can}}} & \;\ldots\;\ar@/^/[l]^{\quad\underset{\omega-s+1}{\vd}}\ar@/^/[r] & \mathrm{h}_{\omega-2}\Ten^r_2\ar@/^/[l]\ar@/^/[r]^{\overset{\omega-1}{\imath_{\Can}}} & \mathrm{h}_{\omega-1}\Ten^r_1\ar@/^/[l]^{\underset{\omega-1}{\vd}}\ar@/^/[r]^{\;\;\overset{\omega}{\imath_{\Can}}} &  \mathrm{h}_{\omega}\Ten^r_0. \ar@/^/[l]^{\;\;\underset{\omega}{\vd}}  }
		\]
		This case keeps the properties of Prop. \ref{main prop} at all \emph{levels} $s\in\mathbb{N}\cup\left\{0\right\}$ (as $\omega-s\neq 0$). However, it does not come up in applications. On the other end, the only sensible way to prolong the ladder to the right appears to be as $0$.

		\item The main point of the diagram \eqref{ladder} is  its lack of commutativity insofar as $\underset{\alpha}{\vd}\circ\overset{\alpha}{\imath_{\Can}}$ is not the identity. It is precisely the failure of $\underset{\alpha}{\vd}\circ\overset{\alpha}{\imath_{\Can}}$ to be $\alpha\,\mathrm{Id}\left(=\overset{\alpha}{\imath_{\Can}}\circ\underset{\alpha}{\vd}\right)$ what produces a nontrivial $\Ker(\overset{\alpha}{\imath_{\Can}})$, see \eqref{proj ker}. This makes \eqref{ladder} differ from a mere sequence of isomorphisms.
	\end{enumerate}
\end{remark}

Let us discuss our intuitions on the ladder labeled by $(r,\omega)$ with $\omega\geq 1$. On it, $\mathrm{h}_\omega\Ten^r_0$ appears as the \emph{ground floor} (or \emph{level $0$}) and contains the objects with the highest homogeneity degree and lowest number of indices. One can regard these as the simplest objects, due to the surjection of the \emph{next floor} (or \emph{level $1$}) $\overset{\omega}{\imath_{\Can}}\colon\mathrm{h}_{\omega-1}\Ten^r_1\rightarrow\mathrm{h}_{\omega}\Ten^r_0$. Indeed, as $\underset{\omega}{\vd}\colon\mathrm{h}_{\omega}\Ten^r_0\rightarrow\mathrm{h}_{\omega-1}\Ten^r_1$ is an injection, \eqref{decomposition} gives
\[
\begin{matrix}
	\mathrm{h}_{\omega-1}\Ten^r_1&=&\Img(\underset{\omega}{\vd})\oplus\Ker(\overset{\omega}{\imath_{\Can}})&\equiv&\mathrm{h}_{\omega}\Ten^r_0\times\Ker(\overset{\omega}{\imath_{\Can}}), \\
	\underset{1}{S}&=&\vd\underset{0}{S}+\underset{1}{\Delta}&\equiv&(\underset{0}{S},\underset{1}{\Delta}).
\end{matrix}
\]
Moreover, under this $\Fun(M)$-algebra isomorphism, $\overset{\omega}{\imath_{\Can}}$ becomes essentially the trivial projection. Indeed, due to \eqref{alg_euler},   
\[
(\underset{0}{S},\underset{1}{\Delta})\equiv\vd\underset{0}{S}+\underset{1}{\Delta}\stackrel{\overset{\omega}{\imath_{\Can}}}{\longmapsto}\omega\,\underset{0}{S}.
\]
Of course, this procedure can be iterated:
\[
\begin{matrix}
	\mathrm{h}_{\omega-2}\Ten^r_2&=&\Img(\underset{\omega-1}{\vd})\oplus\Ker(\overset{\omega-1}{\imath_{\Can}})&\equiv&\mathrm{h}_{\omega-1}\Ten^r_1\times\Ker(\overset{\omega-1}{\imath_{\Can}})&\equiv&\mathrm{h}_{\omega}\Ten^r_0\times\Ker(\overset{\omega}{\imath_{\Can}})\times\Ker(\overset{\omega-1}{\imath_{\Can}}), \\
	 \underset{2}{S}&=&\vd\underset{1}{S}+\underset{2}{\Delta}&\equiv & (\underset{1}{S},\underset{2}{\Delta})&\equiv & (\underset{0}{S},\underset{1}{\Delta},\underset{2}{\Delta});
\end{matrix}
\]
\[
(\underset{0}{S},\underset{1}{\Delta},\underset{2}{\Delta})\equiv\vd\vd\underset{0}{S}+\vd\underset{1}{\Delta}+\underset{2}{\Delta}\stackrel{\overset{\omega-1}{\imath_{\Can}}}{\longmapsto}\left(\omega-1\right)\left(\vd\underset{0}{S}+\underset{1}{\Delta}\right)\equiv\left(\omega-1\right)(\underset{0}{S},\underset{1}{\Delta})\stackrel{\overset{\omega}{\imath_{\Can}}}{\longmapsto}\omega\left(\omega-1\right)\underset{0}{S},
\]
and so on, until obtaining
\[
\begin{matrix}
	\mathrm{h}_{0}\Ten^r_{\omega}&=&\Img(\underset{1}{\vd})\oplus\Ker(\overset{1}{\imath_{\Can}})&\equiv&\mathrm{h}_1\Ten^r_{\omega-1}\times\Ker(\overset{1}{\imath_{\Can}})&\equiv&\mathrm{h}_{\omega}\Ten^r_0\times\Ker(\overset{\omega}{\imath_{\Can}})\times\ldots\times\Ker(\overset{1}{\imath_{\Can}}), \\
	\underset{\omega}{S}&=&\vd\underset{\omega-1}{S}+\underset{\omega}{\Delta}&\equiv & (\underset{\omega-1}{S},\underset{\omega}{\Delta})&\equiv & (\underset{0}{S},\underset{1}{\Delta},\ldots,\underset{\omega}{\Delta});
\end{matrix}
\]
\[
(\underset{0}{S},\underset{1}{\Delta},\ldots,\underset{\omega}{\Delta})\stackrel{\overset{1}{\imath_{\Can}}}{\longmapsto}(\underset{0}{S},\underset{1}{\Delta},\ldots,\underset{\omega-1}{\Delta})\stackrel{\overset{2}{\imath_{\Can}}}{\longmapsto}2(\underset{0}{S},\underset{1}{\Delta},\ldots,\underset{\omega-2}{\Delta})\stackrel{\overset{3}{\imath_{\Can}}}{\longmapsto}\ldots\stackrel{\overset{\omega}{\imath_{\Can}}}{\longmapsto}\omega!\,\underset{0}{S}.
\]
Let us state a general version of this result, in case that one does not want to start at the ground floor or finish at the \emph{top floor} (or \emph{level $\omega$}), which is $\mathrm{h}_{0}\Ten^r_{\omega}$.
\begin{theorem} \label{main th}
	Let $\omega\in\mathbb{N}\cup\left\{0\right\}$ and also $\alpha,\beta\in\left\{0,\ldots,\omega\right\}$ with $\alpha < \beta$. Then, there is a canonical isomorphism of $\Fun(M)$-algebras
	\[
	\begin{matrix}
		&\mathrm{h}_{\alpha}\Ten^r_{\omega-\alpha} &\equiv&\mathrm{h}_{\beta}\Ten^r_{\omega-\beta}\times\Ker(\overset{\beta}{\imath_{\Can}})\times\ldots\times\Ker(\overset{\alpha+2}{\imath_{\Can}})\times\Ker(\overset{\alpha+1}{\imath_{\Can}}), \\
		&\underset{\omega-\alpha}{S}&\equiv&(\underset{\omega-\beta}{S},\underset{\omega-\beta+1}{\Delta},\ldots,\underset{\omega-\alpha-1}{\Delta},\underset{\omega-\alpha}{\Delta}).
	\end{matrix}
	\]
	Under it, the transitions between the levels $s=\omega-\alpha$ and $s=\omega-\beta$ of the ladder \eqref{ladder} are given by
	\begin{equation}
		\begin{matrix}
			\underset{\omega-\beta}{S}\stackrel{\underset{\beta}{\vd}}{\longmapsto}(\underset{\omega-\beta}{S},0),\quad(\underset{\omega-\beta}{S},\underset{\omega-\beta+1}{\Delta})\stackrel{\underset{\beta-1}{\vd}}{\longmapsto}(\underset{\omega-\beta}{S},\underset{\omega-\beta+1}{\Delta},0),\quad\ldots, \\
			(\underset{\omega-\beta}{S},\underset{\omega-\beta+1}{\Delta},\ldots,\underset{\omega-\alpha-1}{\Delta})\stackrel{\underset{\alpha+1}{\vd}}{\longmapsto}(\underset{\omega-\beta}{S},\underset{\omega-\beta+1}{\Delta},\ldots,\underset{\omega-\alpha-1}{\Delta},0);
		\end{matrix}
	\label{sequence injections}
	\end{equation}

	\begin{equation}
		\begin{matrix}
			(\underset{\omega-\beta}{S},\ldots,\underset{\omega-\alpha-1}{\Delta},\underset{\omega-\alpha}{\Delta})\stackrel{\overset{\alpha+1}{\imath_{\Can}}}{\longmapsto}\left(\alpha+1\right)(\underset{\omega-\beta}{S},\ldots,\underset{\omega-\alpha-1}{\Delta})\stackrel{\overset{\alpha+2}{\imath_{\Can}}}{\longmapsto}\ldots \\
			\stackrel{\overset{\beta-1}{\imath_{\Can}}}{\longmapsto}\left(\displaystyle\prod_{\nu=\alpha+1}^{\beta-1}\nu\right)(\underset{\omega-\beta}{S},\underset{\omega-\beta+1}{\Delta})\stackrel{\overset{\beta}{\imath_{\Can}}}{\longmapsto}\left(\displaystyle\prod_{\nu=\alpha+1}^{\beta}\nu\right)\underset{\omega-\beta}{S}.
		\end{matrix}
		\label{sequence projections}
	\end{equation}
\end{theorem}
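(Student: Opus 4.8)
The plan is to prove the statement by iterating the single–step splitting \eqref{decomposition} of Proposition \ref{main prop}, reading it $\beta-\alpha$ times as one descends from level $s=\omega-\alpha$ to level $s=\omega-\beta$. First I would record the one–step version: fixing a level $s$ with $1\leq\omega-s+1$, i.e. $s\leq\omega$, Proposition \ref{main prop} applied with homogeneity label $\omega-s+1$ yields
\[
\mathrm{h}_{\omega-s}\Ten^r_{s}=\Img\bigl(\underset{\omega-s+1}{\vd}\bigr)\oplus\Ker\bigl(\overset{\omega-s+1}{\imath_{\Can}}\bigr)\equiv\mathrm{h}_{\omega-s+1}\Ten^r_{s-1}\times\Ker\bigl(\overset{\omega-s+1}{\imath_{\Can}}\bigr),
\]
where the identification of the first summand with the lower level uses that $\underset{\omega-s+1}{\vd}$ is injective (again Proposition \ref{main prop}), and is an isomorphism of $\Fun(M)$-algebras because both $\vd$ and $\imath_{\Can}$ are $\Fun(M)$-linear. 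The crucial point, which I would check explicitly, is that every homogeneity label arising as we descend, namely $\alpha+1,\alpha+2,\ldots,\beta$, is $\geq 1$ and hence nonzero; this is exactly what the hypotheses $0\leq\alpha<\beta\leq\omega$ guarantee, and it is what makes the direct–sum splitting available at each step.

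Next I would compose these one–step identifications. Starting from $\mathrm{h}_{\alpha}\Ten^r_{\omega-\alpha}$ and peeling off one kernel factor at a time via the displayed isomorphism, with successive labels $\alpha+1,\alpha+2,\ldots,\beta$, gives
\[
\mathrm{h}_{\alpha}\Ten^r_{\omega-\alpha}\equiv\mathrm{h}_{\beta}\Ten^r_{\omega-\beta}\times\Ker\bigl(\overset{\beta}{\imath_{\Can}}\bigr)\times\cdots\times\Ker\bigl(\overset{\alpha+1}{\imath_{\Can}}\bigr),
\]
which is the asserted isomorphism. Under it, an element $\underset{\omega-\alpha}{S}$ is coded by the tuple $(\underset{\omega-\beta}{S},\underset{\omega-\beta+1}{\Delta},\ldots,\underset{\omega-\alpha}{\Delta})$ obtained by applying \eqref{proj img}--\eqref{proj ker} repeatedly: at each level the $\mathrm{h}$-component is $\tfrac{1}{\lambda}\,\imath_{\Can}$ of the current tensor (with $\lambda$ the relevant label) and the $\Ker$-component is the complementary residue. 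Being a finite composition of $\Fun(M)$-algebra isomorphisms, the total map is one.

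Finally I would check the transition formulas \eqref{sequence injections} and \eqref{sequence projections} against this coding. For the injections (going up, by $\vd$), the image of $\underset{\omega-s+1}{\vd}$ is precisely the $\Img$-summand of \eqref{decomposition}, so applying $\vd$ places a tensor in that summand with vanishing $\Ker$-component; under the identification of $\Img(\vd)$ with the lower level, this records the same tuple with a new $0$ appended, exactly as in \eqref{sequence injections}. For the projections (going down, by $\imath_{\Can}$), I would write the current tensor as $\vd T+R$ with $R\in\Ker(\overset{\lambda}{\imath_{\Can}})$ and use \eqref{alg_euler} together with $\overset{\lambda}{\imath_{\Can}}R=0$ to get $\overset{\lambda}{\imath_{\Can}}(\vd T+R)=\lambda\,T$; thus one step drops the last residue and multiplies by $\lambda$, so iterating from label $\alpha+1$ up to $\beta$ produces the cumulative factor $\prod_{\nu=\alpha+1}^{\beta}\nu$ of \eqref{sequence projections}. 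The only genuine obstacle here is notational bookkeeping — aligning the level index $s$, the homogeneity label $\omega-s$, and the kernel subscripts — since all the mathematical content is already contained in Proposition \ref{main prop}; there is no new difficulty beyond organizing the iteration correctly.
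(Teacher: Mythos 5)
Your proposal is correct and takes essentially the same approach as the paper: both iterate the one-step decomposition \eqref{decomposition} of Proposition \ref{main prop} across the homogeneity labels $\alpha+1,\ldots,\beta$ (all nonzero by the hypothesis $0\leq\alpha<\beta\leq\omega$), invoking injectivity of $\underset{\nu}{\vd}$ to establish \eqref{sequence injections} and Euler's identity \eqref{alg_euler} applied to $\vd T+R$ with $R$ in the kernel to establish \eqref{sequence projections}. The only difference is presentational: the paper runs the iteration from $\nu=\beta$ up to $\nu=\alpha+1$ and additionally writes out the explicit component formulas of the iterated decomposition, which is clarification rather than essential content.
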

\begin{proof}
	One reproduces the above process, now starting at $\mathrm{h}_{\beta-1}\Ten^r_{\omega-\beta+1}\equiv\mathrm{h}_{\beta}\Ten^r_{\omega-\beta}\times\Ker(\overset{\beta}{\imath_{\Can}})$ so that $\underset{\beta}{\vd}\colon\underset{\omega-\beta}{S}\in\mathrm{h}_{\beta}\Ten^r_{\omega-\beta}\mapsto(\underset{\omega-\beta}{S},0)\in\mathrm{h}_{\beta}\Ten^r_{\omega-\beta}\times\Ker(\overset{\beta}{\imath_{\Can}})$ and  $\overset{\beta}{\imath_{\Can}}\colon(\underset{\omega-\beta}{S},\underset{\omega-\beta+1}{\Delta})\in\mathrm{h}_{\beta}\Ten^r_{\omega-\beta}\times\Ker(\overset{\beta}{\imath_{\Can}})\mapsto\beta\underset{\omega-\beta}{S}\in\mathrm{h}_{\beta}\Ten^r_{\omega-\beta}$. In general, one uses \eqref{decomposition} to decompose $\mathrm{h}_{\nu-1}\Ten^r_{\omega-\nu+1}\equiv\mathrm{h}_{\nu}\Ten^r_{\omega-\nu}\times\Ker(\overset{\nu}{\imath_{\Can}})$ iterating from $\nu=\beta$ to $\nu=\alpha+1$.
	
	At each step, the injectivity of $\underset{\nu}{\vd}$ is being used in establishing \eqref{sequence injections}, while one uses \eqref{alg_euler} to establish \eqref{sequence projections}. Still, for further clarification, let us use this chance to write down explicitly the components of a general element of each of $\mathrm{h}_{\beta-1}\Ten^r_{\omega-\beta+1}$, ..., $\mathrm{h}_{\alpha}\Ten^r_{\omega-\alpha}$:
	\[
	\underset{\omega-\beta+1}{S}\in\mathrm{h}_{\beta-1}\Ten^r_{\omega-\beta+1}\implies	\underset{\omega-\beta+1}{S}^I_{j_1\ldots j_{\omega-\beta+1}}=\underset{\omega-\beta}{S}^I_{j_1\ldots j_{\omega-\beta}\, \cdot j_{\omega-\beta+1}}+\underset{\omega-\beta+1}{\Delta}^I_{j_1\ldots j_{\omega-\beta+1}},
	\]
	
	\[
	\underset{\omega-\beta+2}{S}\in\mathrm{h}_{\beta-2}\Ten^r_{\omega-\beta+2}\implies\underset{\omega-\beta+2}{S}^I_{j_1\ldots j_{\omega-\beta+2}}=\underset{\omega-\beta+1}{S}^I_{j_1\ldots j_{\omega-\beta+1}\,\cdot j_{\omega-\beta+2}}+\underset{\omega-\beta+2}{\Delta}^I_{j_1\ldots j_{\omega-\beta+2}}
	\]
	\[
	=\underset{\omega-\beta}{S}^I_{j_1\ldots  \cdot j_{\omega-\beta+1}\cdot j_{\omega-\beta+2}}+\underset{\omega-\beta+1}{\Delta}^I_{j_1\ldots j_{\omega-\beta+1}\,\cdot j_{\omega-\beta+2}}+\underset{\omega-\beta+2}{\Delta}^I_{j_1\ldots j_{\omega-\beta+2}},
	\]
	
	\[
	\vdots
	\]
	
	\[
	\begin{split}
		\underset{\omega-\alpha}{S}\in\mathrm{h}_{\alpha}\Ten^r_{\omega-\alpha}\implies\underset{\omega-\alpha}{S}^I_{j_1\ldots j_{\omega-\alpha}}=&\underset{\omega-\beta}{S}^I_{j_1\ldots j_{\omega-\beta}\, \cdot j_{\omega-\beta+1}\ldots\cdot j_{\omega-\alpha}} \\
		&+\sum_{\nu=\alpha+1}^{\beta}\underset{\omega-\nu+1}{\Delta}^I_{j_1\ldots j_{\omega-\nu+1}\,\cdot j_{\omega-\nu+2}\ldots\cdot j_{\omega-\alpha}}.
	\end{split}
	\]
	Here we have abbreviated $I=(i_1,\ldots, i_r)$ and the homogeneity degree of $\underset{\omega-\nu}{S}^I_{j_1\ldots j_{\omega-\nu}}$ is $\nu$, while that of $\underset{\omega-\nu+1}{\Delta}^I_{j_1\ldots j_{\omega-\nu+1}}$ is $\nu-1$ with $\underset{\omega-\nu+1}{\Delta}^I_{j_1\ldots j_{\omega-\nu}\,a}y^a=0$. With these properties and Euler's theorem \eqref{euler}, it is straightforward to check \eqref{sequence projections}.
\end{proof}
All in all, we see that, when $\beta-\alpha\in\mathbb{N}$, an element of $\mathrm{h}_{\alpha}\Ten^r_{\omega-\alpha}$ is the same as one of $\mathrm{h}_{\beta}\Ten^r_{\omega-\beta}$ together with a tuple $(\underset{\omega-\beta+1}{\Delta},\ldots,\underset{\omega-\alpha-1}{\Delta},\underset{\omega-\alpha}{\Delta})\in\Ker(\overset{\beta}{\imath_{\Can}})\times\ldots\times\Ker(\overset{\alpha+2}{\imath_{\Can}})\times\Ker(\overset{\alpha+1}{\imath_{\Can}})$ which may or may not be $0$. We have just expressed the precise sense in which the elements of $\mathrm{h}_{\alpha}\Ten^r_{\omega-\alpha}$ are more complex than those of $\mathrm{h}_{\beta}\Ten^r_{\omega-\beta}$. This goes all the way to the level $\omega$ (namely $\mathrm{h}_{0}\Ten^r_{\omega}$), which contains the most general objects. Due to \eqref{sequence injections}, if an element of $\underset{\omega-\alpha}{S}\in\mathrm{h}_{\alpha}\Ten^r_{\omega-\alpha}$ has, say, $\underset{\omega-\alpha}{\Delta}=0$, this means that it is essentially an element of $\mathrm{h}_{\alpha+1}\Ten^r_{\omega-\alpha-1}$, and if it has $(\underset{1}{\Delta},\underset{2}{\Delta},\ldots,\underset{\omega-\alpha}{\Delta})=(0,\ldots,0)$, then it essentially belongs to $\mathrm{h}_{\omega}\Ten^r_{0}$. Notice that, by construction, the $\underset{s}{\Delta}$ appear as obstructions to certain   integrabilities,   which justifies the following terminology:
\begin{definition} \label{residues}
	Given $\underset{\omega-\alpha}{S}\in\mathrm{h}_{\alpha}\Ten^r_{\omega-\alpha}$, we call the corresponding $\underset{\omega-\beta+1}{\Delta}$, ..., $\underset{\omega-\alpha-1}{\Delta}$,  $\underset{\omega-\alpha}{\Delta}$ defined in Th. \ref{main th} the \emph{(integrability) residues} of $\underset{\omega-\alpha}{S}$ with respect to the level $\omega-\beta$ of the ladder \eqref{ladder}.

\end{definition}
\begin{remark} \label{synthesis}
	As a synthesis of all the possible transitions on \eqref{ladder}, as expressed by \eqref{sequence injections} and \eqref{sequence projections}, one has that:
	\begin{itemize}
		\item Going up on the ladder and then back down (i.e., applying $\imath_{\Can}\circ\ldots\circ\imath_{\Can}\circ\vd\circ\ldots\circ\vd$) leaves each object intact up to a constant factor.
		
		\item But doing the opposite (i.e., applying $\vd\circ\ldots\circ\vd\circ\imath_{\Can}\circ\ldots\circ\imath_{\Can}$) destroys their residues! When going down to level $0$, the ground floor, the resulting map is 
		\begin{equation}
			\begin{split}
				\underset{\omega-\alpha}{S}\equiv(\underset{0}{S},\underset{1}{\Delta},\ldots,\underset{\omega-\alpha}{\Delta})\longmapsto\left(\prod_{\nu=\alpha+1}^{\omega}\nu\right)(\underset{0}{S},0,\ldots,0)&\equiv\left(\prod_{\nu=\alpha+1}^{\omega}\nu\right)\underset{\alpha+1}{\vd}\ldots\underset{\omega}{\vd}\underset{0}{S} \\
				&=\left(\prod_{\nu=\alpha+1}^{\omega}\nu\right)\vd^{\omega-\alpha}\underset{0}{S}.
			\end{split}
		\label{destroying defects}
		\end{equation}
		In other words, the result will always be an element of $\mathrm{h}_{\omega}\Ten_0^r$, identifiable with another element of $\mathrm{h}_{\alpha}\Ten_{\omega-\alpha}^r$ (its $\left(\omega-\alpha\right)$-th vertical derivative). In general, $	\underset{\omega-\alpha}{S}$ and $\vd^{\omega-\alpha}\underset{0}{S}$ will differ and each of them may be regarded as a ``correction" of the other, depending on one's purpose. (For example, in \S \ref{section connections}, there will be a correspondence of $\underset{\omega-\alpha}{S}$ and $\vd^{\omega-\alpha}\underset{0}{S}$ with, resp., the Chern and Berwald anisotropic connections of a semi-Finsler Lagrangian, so the residues will amount to the Landsberg tensor.)
		
		\item The operator $\imath_{\Can}$ subtracts an index of the tensor on which it acts to add a degree of homogeneity whereas $\vd$ does the opposite, but this one does not destroy information. Thus, the sum of the homogeneity degree and the number of covariant indices always remains equal to $\omega$, recall \eqref{ladder}. This is the motivation for the notation $\mathrm{h}_{\alpha}\Ten_s^r$ instead of $\mathrm{h}^{\alpha}\Ten_s^r$: in a precise sense, the number $\alpha$ counts ``hidden" covariant indices.
		
		\item From \S \ref{preliminaries}, we have chosen $\imath_{\Can}$ and $\vd$ to act on the last index, and our concrete construction of \eqref{ladder}, Prop. \ref{main prop} and Th. \ref{main th} depend on this convention. But the map \eqref{destroying defects} does not! So, if, say, we would have chosen $\imath_{\Can}$ and $\vd$ to act on the first index, then, for $\underset{\omega-\alpha}{S}\in\mathrm{h}_{\alpha}\Ten^r_{\omega-\alpha}$, the individual residues $\underset{1}{\Delta}$, ..., $\underset{\omega-\alpha}{\Delta}$ would change. But, what would remain unchanged is whether they are $0$ or not, and also the underlying $\underset{0}{S}\in\mathrm{h}_{\omega}\Ten^r_{0}$.
	\end{itemize} 
\end{remark}
%

In the next two sections, we study the application of this ladder structure to the two cases of interest in semi-Finsler geometry and its generalizations:
\begin{enumerate}
	\item   \emph{Metric-type objects}, by which we mean semi-Finsler Lagrangians or anisotropic metrics, appear on the ladder labeled by $(r,\omega)=(0,2)$. (Therefore, Legendre transformations will be included in a natural way.)   Still, these elements are not just any kind of tensors. Rather, they fulfill symmetry and nondegeneracy conditions that must be carefully taken into account to complement their situation on the ladder.
	
	\item   \emph{Connection-type objects}, such as nonlinear or anisotropic connections (but also sprays),   will appear in a new ladder of affine spaces directed by the vector ones labeled by $(r,\omega)=(1,2)$.    Furthermore, the linear connections on the vertical bundle $\VV A\rightarrow A$ will be included in \S \ref{section linear} as a special kind of prolongation   of this affine ladder.
\end{enumerate}
  Later, in \S \ref{variational problems}, we shall discuss the implications of this structure on the functionals definable for each type of object. Indeed, due to our developments, some functionals for elements in   one level of the ladder can be defined in a natural way on the other levels. This is important for the various extensions of general relativity to Lorentz-Finsler geometry and beyond, which are of a great interest nowadays.

\section{Applications to metric-type objects} \label{section metrics}

\subsection{From semi-Finsler Lagrangians to anisotropic metrics} \label{section metrics 1}

The starting point of semi-Finsler geometry is a $2$-homogeneous function defined on $A$ \cite{BF}. Therefore, this object belongs to the ground floor of \eqref{ladder} for $(r,\omega)=(0,2)$. Here, we give its definition and interpretations of its first and second vertical derivatives.   This is standard, and only names may change with respect to previous accounts, e.g. \emph{semi-Finsler Lagrangian} instead of \emph{pseudo-Finsler metric}.   However, we will \emph{not} employ the notation $g$ for its vertical Hessian; instead, we will reserve $g$ to denote an arbitrary anisotropic metric (Def. \ref{anis_met}).
\begin{definition}
	A function $L\in\mathrm{h}_2\mathcal{F}(A)=\mathrm{h}_2\Ten_0^0(M_A)$ is called a \emph{semi-Finsler Lagrangian} provided that the covariant anisotropic tensor $\vd^2L$ is nondegenerate on all of $A$. Then, the \emph{Legendre transformation associated with $L$} is 
	\[
	\vd L\in\mathrm{h}_1\Ten_1^0(M_A),
	\]
	while its \emph{fundamental tensor} is 
	\begin{equation}
		\frac{1}{2}\vd^2 L\in\mathrm{h}_0\Ten_2^0(M_A).
		\label{fundamental tensor}
	\end{equation}
 We shall denote the set of all semi-Finsler Lagrangians by $\mathscr{M}_{\mathrm{s-F}}(A)$.
\end{definition}
Our notion of associated Legendre transformation agrees with Minguzzi's \cite[Def. 4]{Min}, and with Dahl's \cite[Def. 1.8]{Dahl} up to a factor of $\frac{1}{2}$. Indeed,\footnote{So, Dahl considers the Legendre transformation $\frac{1}{2}\vd^2 L$. However, with this, one can see, e.g., that the Hamiltonian corresponding to $L$ would be $0$. With the usual convention, which is Minguzzi's and ours, the Hamiltonian corresponding to a $2$-hom. Lagrangian is itself but defined on $\TT^*M$. By contrast, our $\varphi$ is  Minguzzi's $\frac{1}{2}g$ and Dahl's $g$, the latter being the most usual convention.} denoting 
\begin{equation}
	\varphi:=\frac{1}{2}\vd^2 L,
	\label{varphi}
\end{equation}
  for any $v\in A_p\subset A$ and by applying \eqref{alg_euler} to $\vd L$ (so $\alpha=1$), 
\[
\left(\vd L\right)_v(-)=\left(\imath_{\Can}\vd\vd L\right)_v(-)=\left(\vd^2L\right)_v(-,\Can_v)=2\varphi_v(-,v)\in\TT^\ast_{p}M.
\]

Now, way beyond semi-Finsler geometry, the concept of Legendre transformation is central to e.g. Lagrangian mechanics. It could be defined for any regular Lagrangian, providing maps from (a subset of) $\TT_pM$ to $\TT^\ast_{p}M$ \cite[\S 4.2]{HSS}. We will explore this concept accordingly with the level above that of semi-Finsler Lagrangians, i.e. $\mathrm{h}_1\Ten^0_1\rightarrow\mathrm{h}_2\Ten^0_0$ in \eqref{ladder}, but maintaining a nondegeneracy condition. (This is needed e.g. to locally map a Lagrangian on $\TT M$ to a Hamiltonian on $\TT^\ast M$.)
\begin{definition} \label{Legendre}
	An anisotropic $1$-form $\ell\in\Ten_1^0(M_A)$ will be called a \emph{Legendre transformation} provided that for each $p\in M$, the map $v\in A_p\mapsto\ell_v=\ell_i(v)\,\dd x^i_{p}\in\TT^\ast_{p}M$ is a local diffeomorphism onto its image. Equiv., provided that the anisotropic tensor $\vd\ell=\ell_{\cdot j}\,\dd x^i\otimes\dd x^j$ is nondegenerate on all of $A$. We shall always assume that our Legendre transformations are $1$-homogeneous, so $\ell\in\mathrm{h}_1\Ten_1^0(M_A)$. The set of all Legendre transformations will be denoted by $\mathscr{M}_{\mathrm{Lt}}(A)$.
\end{definition}
It is trivially true that the Legendre transformation associated with a semi-Finsler Lagrangian $L$ indeed satisfies Def. \ref{Legendre}. Thus, the injective $\underset{2}{\vd}\colon\mathrm{h}_2\Ten^0_0\rightarrow\mathrm{h}_1\Ten^0_1$ actually maps $\mathscr{M}_{\mathrm{s-F}}(A)$ to $\mathscr{M}_{\mathrm{Lt}}(A)$. As per Th. \ref{main th} for $(\alpha,\beta)=(1,2)$ (recall here $(r,\omega)=(0,2)$), each  $\ell\in\mathscr{M}_{\mathrm{Lt}}(A)$ corresponds to a certain pair $(\underset{0}{\ell},\underset{1}{\Delta})\in\mathrm{h}_{2}\Ten^0_{0}\times\Ker(\overset{2}{\imath_{\Can}})$ with $\underset{0}{\ell}=\frac{1}{2}\,\overset{2}{\imath_{\Can}}\,\ell$. But, for instance, it is not clear whether $\underset{0}{\ell}\in\mathscr{M}_{\mathrm{s-F}}(A)$.
 
When considering anisotropic extensions of relativity (that is, violating Lorentz symmetry), there is an obvious notion more general than a fundamental tensor \eqref{fundamental tensor}. Namely, that of a Lorentzian scalar product that differs for each \emph{observer} $v\in A_p$. Such a collection $g=\left(g_v\right)$ of scalar products arises in various settings:
\begin{itemize}
	\item It is one of the dynamical variables of García-Parrado and Minguzzi's recent theory \cite{GPMin}. (See \cite{Min23} for work that relates it with Lorentz-Finsler relativity.)
	
	\item Anisotropic gravitational theories from other authors, such as Vacaru \cite{Vac}, also make full sense for non-Finslerian $g$'s.
	
	\item It is the subject of Miron's generalized Lagrange geometry \cite{Mir} (see also \cite[Ch. X]{MiAn}), yielding applications to e.g. relativistic optics \cite{MK}.
 
\end{itemize} 
\begin{definition}\label{anis_met}
	A symmetric anisotropic tensor $g\in\Ten_2^0(M_A)$ will be called an \emph{anisotropic metric} provided that it is nondegenerate on all of $A$. We shall always assume that our anisotropic metrics are $0$-homogeneous, so $g\in\mathrm{h}_0\Ten_2^0(M_A)$. We denote the set of anisotropic metrics by $\mathscr{M}_{\mathrm{anis}}(A)$.
\end{definition}

The injection $\underset{1}{\vd}\colon\mathrm{h}_1\Ten^0_1\rightarrow\mathrm{h}_0\Ten^0_2$ does not quite map Legendre transformations $\ell$ to anisotropic metrics, for $\vd\ell$ does not need to be symmetric (and its symmetrization could degenerate). Applying Th. \ref{main th} with $(\alpha,\beta)=(0,1)$ and  $(\alpha,\beta)=(0,2)$ shows that each $g\in \mathscr{M}_{\mathrm{anis}}(A)$ is equivalent to certain
\begin{equation}
	\begin{matrix}
		(\underset{1}{g},\underset{2}{\Delta})&\in&\mathrm{h}_1\Ten^0_1\times\Ker(\overset{1}{\imath_{\Can}}) \\
		\rotatebox[origin=c]{90}{$\equiv$}& & \rotatebox[origin=c]{90}{$\equiv$}\\
		(\underset{0}{g},\underset{1}{\Delta},\underset{2}{\Delta}) & \in&\mathrm{h}_2\Fun\times\Ker(\overset{2}{\imath_{\Can}})\times\Ker(\overset{1}{\imath_{\Can}})
	\end{matrix}
\label{decomposition anis metric}
\end{equation}
with $\underset{1}{g}=\overset{1}{\imath_{\Can}}\,g$, $\underset{0}{g}=\frac{1}{2}\overset{2}{\imath_{\Can}}\overset{1}{\imath_{\Can}}\,g$. But, as above, it is not clear what nondegeneracy conditions $\underset{1}{g}$ or $\underset{0}{g}$ will fulfill.

\subsection{The ladder vs. symmetry} \label{symmetry}

In \eqref{decomposition anis metric}, it does not appear easy to characterize the symmetry of $g=\vd\underset{1}{g}+\underset{2}{\Delta}=\vd\vd\underset{0}{g}+\vd\underset{1}{\Delta}+\underset{2}{\Delta}\in \mathrm{h}_0\Ten_2^0(M_A)$ in terms of $\underset{1}{\Delta}$ and $\underset{2}{\Delta}$. This suggests the general problem, in Th. \ref{main th}, of determining which elements of $\mathrm{h}_{\beta}\Ten^r_{\omega-\beta}\times\Ker(\overset{\beta}{\imath_{\Can}})\times\ldots\times\Ker(\overset{\alpha+2}{\imath_{\Can}})\times\Ker(\overset{\alpha+1}{\imath_{\Can}})$ correspond to those in $\mathrm{h}_{\alpha}\Ten^r_{\omega-\alpha}$ that are symmetric in their covariant indices. However, it is to be expected that such a problem does not have an easy solution. Indeed, as mentioned in Rem. \ref{synthesis}, when changing conventions for $\imath_{\Can}$ and $\vd$, the residues $(\underset{\omega-\beta+1}{\Delta},\ldots,\underset{\omega-\alpha-1}{\Delta},\underset{\omega-\alpha}{\Delta})$ would change and only the symmetric $\vd^{\omega-\alpha}\underset{0}{S}$ would remain invariant. Therefore, it seems reasonable that the $\underset{s}{\Delta}$ are not so suitable to characterize the symmetry of $\underset{\omega-\alpha}{S}$. We will not attempt to adapt the theory to deal with the subspaces of \eqref{ladder} that are symmetric it their covariant entries. Because of this, we will usually have to \emph{assume} that a given $g\in\mathrm{h}_{0}\Ten^0_{2}$ is symmetric in order to have it be an anisotropic metric.

The situation is different if one wants to characterize the symmetry of the $\emph{vertical derivative}$ of an $\ell\in\mathrm{h}_{1}\Ten^0_{1}$. If $\ell=\vd\underset{0}{\ell}$ for an $\underset{0}{\ell}\in\mathrm{h}_{2}\Ten^0_{0}$, then obviously $\vd\ell$ is symmetric, and this sufficient condition is also necessary.
\begin{proposition} \label{characterization}
	For a given $\ell\in\mathscr{M}_{\mathrm{Lt}}(A)$, the following are equivalent: 
	\begin{enumerate}[label=\it{\roman*)}]
		\item \label{characterization 1} Its vertical derivative is an anisotropic metric, i.e.,  $\vd\ell\in\mathscr{M}_{\mathrm{anis}}(A)$.
		
		\item \label{characterization 2} $\underset{1}{\Delta}=0$.
		
		\item \label{characterization 3} $\ell$ is the transformation associated with a (unique) Lagrangian $L\in\mathscr{M}_{\mathrm{s-F}}(A)$.
	\end{enumerate}
\end{proposition}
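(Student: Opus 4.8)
The plan is to establish the cycle of implications \ref{characterization 1}$\,\Rightarrow\,$\ref{characterization 2}$\,\Rightarrow\,$\ref{characterization 3}$\,\Rightarrow\,$\ref{characterization 1}. The starting observation is that, since $\ell\in\mathscr{M}_{\mathrm{Lt}}(A)$, its vertical derivative $\vd\ell\in\mathrm{h}_0\Ten_2^0(M_A)$ is automatically nondegenerate (Def.~\ref{Legendre}) and $0$-homogeneous. Hence condition \ref{characterization 1} amounts to the sole requirement that $\vd\ell$ be \emph{symmetric}, and the whole statement reduces to tying this symmetry to the vanishing of the residue $\underset{1}{\Delta}$ and to $\ell$ belonging to $\Img(\underset{2}{\vd})$.

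The computation I would carry out first is an explicit formula for $\underset{1}{\Delta}$. By the projection \eqref{proj ker} with $\alpha=2$, we have $\underset{1}{\Delta}=\ell-\vd\bigl(\tfrac{1}{2}\imath_{\Can}\ell\bigr)$; writing $\ell=\ell_i\,\dd x^i$ and differentiating $\imath_{\Can}\ell=\ell_a\y^a$ yields, componentwise,
\[
\underset{1}{\Delta}_j=\tfrac{1}{2}\bigl(\ell_j-\ell_{i\cdot j}\,\y^i\bigr).
\]
Applying Euler's theorem \eqref{euler} to the $1$-homogeneous $\ell_j$, i.e. $\ell_{j\cdot i}\y^i=\ell_j$, this becomes
\[
\underset{1}{\Delta}_j=\tfrac{1}{2}\bigl(\ell_{j\cdot i}-\ell_{i\cdot j}\bigr)\y^i,
\]
which identifies $\underset{1}{\Delta}$ with the contraction by $\Can$ of the antisymmetric part of $\vd\ell$. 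This identification is the heart of the matter.

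The implications then follow at once. For \ref{characterization 1}$\,\Rightarrow\,$\ref{characterization 2}: if $\vd\ell$ is symmetric, i.e. $\ell_{i\cdot j}=\ell_{j\cdot i}$, the last display forces $\underset{1}{\Delta}=0$. For \ref{characterization 2}$\,\Rightarrow\,$\ref{characterization 3}: $\underset{1}{\Delta}=0$ means $\ell=\vd\underset{0}{\ell}$ with $\underset{0}{\ell}=\tfrac{1}{2}\imath_{\Can}\ell\in\mathrm{h}_2\Ten_0^0$, so setting $L:=\underset{0}{\ell}$ gives $\vd^2 L=\vd\ell$, nondegenerate by hypothesis; thus $L\in\mathscr{M}_{\mathrm{s-F}}(A)$ and $\ell=\vd L$ is its associated transformation. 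Uniqueness of $L$ is immediate from the injectivity of $\underset{2}{\vd}$ (Prop.~\ref{main prop}, $\alpha=2\neq0$). For \ref{characterization 3}$\,\Rightarrow\,$\ref{characterization 1}: if $\ell=\vd L$ then $\vd\ell=\vd^2 L$ is symmetric because mixed vertical second derivatives commute, whence $\vd\ell\in\mathscr{M}_{\mathrm{anis}}(A)$.

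The proof is essentially computational, so I do not foresee a genuine obstacle; the one step deserving care is the key computation, namely the use of Euler's theorem to trade the term $\ell_j$ for $\ell_{j\cdot i}\y^i$. It is precisely this substitution that converts the residue $\underset{1}{\Delta}$ into a manifestly antisymmetric expression, thereby forging the link between \ref{characterization 2} and the symmetry condition \ref{characterization 1}.
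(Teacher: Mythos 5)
Your proof is correct and takes essentially the same route as the paper: both compute the residue explicitly from \eqref{proj ker}, obtaining $\underset{1}{\Delta}_j=\tfrac{1}{2}\bigl(\ell_j-\ell_{i\cdot j}\,\y^i\bigr)$, invoke Euler's theorem \eqref{euler} on the $1$-homogeneous components $\ell_j$, and close the cycle \emph{i)} $\Rightarrow$ \emph{ii)} $\Rightarrow$ \emph{iii)} $\Rightarrow$ \emph{i)}, with the last implication being the trivial symmetry of $\vd^2 L$. Your rewriting of the residue as the $\Can$-contraction of the antisymmetric part of $\vd\ell$, and your explicit verification of the nondegeneracy of $\vd^2L$ and of the uniqueness of $L$ via injectivity of $\underset{2}{\vd}$, are minor refinements of the paper's argument rather than a different method.
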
 
\begin{proof}
	The general case is $\ell=\vd\underset{0}{\ell}+\underset{1}{\Delta}$, where the residue with respect to $\mathrm{h}_{2}\Ten^0_{0}$ is 
	\[
	\underset{1}{\Delta_i}=\ell_i-\left(\frac{1}{2}\ell_ay^a\right)_{\cdot i}=\ell_i-\frac{1}{2}\ell_{a\,\cdot i}y^a-\frac{1}{2}\ell_i=\frac{1}{2}\ell_i-\frac{1}{2}\ell_{a\,\cdot i}y^a
	\]
	(recall \eqref{proj ker}). This way, if $\vd\ell$ is symmetric, then
	\[
	\underset{1}{\Delta_i}=\frac{1}{2}\ell_i-\frac{1}{2}\ell_{i\,\cdot a}y^a=\frac{1}{2}\ell_i-\frac{1}{2}\ell_i=0.
	\]
	In this latter case, of course, $\ell=\vd\underset{0}{\ell}$. We have just proven \ref{characterization 1} $\implies$ \ref{characterization 2} $\implies$ \ref{characterization 3}, whereas \ref{characterization 3} $\implies$ \ref{characterization 1} was commented above.
\end{proof} 

\begin{remark} \label{characterization remark}
	Analogously, consider the well-known characterization that a $g\in\mathscr{M}_{\mathrm{anis}}(A)$ is the fundamental tensor of a semi-Finsler Lagrangian if and only if $\vd g$ is totally symmetric \cite[Th. 3.4.2.1]{AIM}. One can also deduce this by relating the components of the $\underset{1}{\Delta}$ and $\underset{2}{\Delta}$ of \eqref{decomposition anis metric} with $g_{ij\,\cdot k}-g_{ik\,\cdot j}$. This and Prop. \ref{characterization} are instances of a general relation of the residues in Th. \ref{main th} with symmetry defects of vertical derivatives of $\underset{\omega-\alpha}{S}$. Such a relation is straightforward but lengthy to obtain, so we shall not write it down explicitly.
\end{remark}

\subsection{The ladder vs. signature} \label{regularity}

For $g\in\mathrm{h}_0\Ten^0_2$, if 
\[
\overset{2}{\imath_{\Can}}\overset{1}{\imath_{\Can}}\,g=L
\] 
and $g$ is an anisotropic metric, we already mentioned that it is possible for $L$ not to be a semi-Finsler Lagrangian. Denoting, as in \eqref{varphi}, $	\varphi:=\frac{1}{2}\vd^2L\in\mathrm{h}_0\Ten^0_2$, it is also possible that $g$ is denegerate while $\varphi$ is not. But, for the relativistic applications, it is important to point out that even if $g,\varphi\in\mathscr{M}_{\mathrm{anis}}(A)$, their signatures may be different. Let us see a general example of this phenomenon, where $\varphi$ is obtained by performing a sort of observer-dependent Wick rotation.
\begin{example} \label{example regularity}
	Now, let $L\in\mathrm{h}_2\Fun(A)$ be given with $\varphi$ nondegenerate, assume that  $L$ never vanishes on $A$ and take a parameter $\kappa\in\mathbb{R}$. We use this to define a symmetric $g\in\mathrm{h}_0\Ten^0_2$ whose projection to the ground floor $\mathrm{h}_2\Ten^0_0$ will be essentially $L$. It is given, for any  $v\in A_p\subset A$, by
	\begin{equation}
		g_v(u,w):=\varphi_v(u,w)+\kappa\frac{\varphi_v(v,u)\,\varphi_v(v,w)}{L(v)}\quad\left(u,w\in\TT_p M\right).
		\label{g example}
	\end{equation}
	Some observations on $g$:
	\begin{enumerate}
		\item By Euler's theorem, $\varphi_v(v,v)=L(v)$ and then
		\begin{equation}
			g_v(v,-)=\varphi_v(v,-)+\kappa\frac{\varphi_v(v,v)\,\varphi_v(v,-)}{L(v)}=\left(1+\kappa\right)\varphi_v(v,-).
			\label{example auxiliar}
		\end{equation}
		\item Let $E_v:=\Ker(\varphi_v(v,-))\subset\TT_pM$; since $\varphi_v(v,v)=L(v)\neq 0$, it is a hyperplane transverse to $v$.  
		From  \eqref{g example}, it is clear that 
		\[
		\left.g_v\right|_{E_v\times E_v}=\left.\varphi_v\right|_{E_v\times E_v}.
		\]
		Thus, the set composed of the orthogonal bases $\left\{e_1,\ldots,e_{n-1}\right\}$ of $E_v$ is the same when computed with respect to $g_v$ and to $\varphi_v$. 
		
		\item This allows for the comparison of the signatures of $g_v$ and $\varphi_v$. Indeed, take the basis  $\mathcal{B}:=\left\{v,e_1,\ldots,e_{n-1}\right\}$ of $\TT_pM$: by construction, it is $\varphi_v$-orthogonal, and by \eqref{example auxiliar}, it is also $g_v$-orthogonal. Putting
		\[
		\mathrm{Mat}(\varphi_v,\mathcal{B})=\mathrm{diag}(L(v),\epsilon_1,\ldots,\epsilon_n),
		\]
		we obtain that 
		\[
		\mathrm{Mat}(g_v,\mathcal{B})=\mathrm{diag}(\left(1+\kappa\right)L(v),\epsilon_1,\ldots,\epsilon_n).
		\]
		As a result:
		\begin{itemize}
			\item If $\kappa>-1$, then $g_v$ and $\varphi_v$ share their signature.
			
			\item If $\kappa=-1$, then $g_v$ is degenerate (though $\varphi_v$ was not).
			
			\item If $\kappa<-1$, then the signature changes from $g_v$ to $\varphi_v$, switching a negative sign to a positive one in case that $L(v)>0$ and the opposite in case that $L(v)<0$. This allows, among others, for transitions between positive definite metrics and Lorentzian ones (taking these with signature $(-,+,\ldots,+)$). 
		\end{itemize}
		
		\item The result of destroying the residues of $g$ (as in \eqref{destroying defects}) is $\varphi$ up to a constant factor. Indeed, recalling \eqref{sequence projections}, as a first step we will apply $\frac{1}{2}\,\overset{2}{\imath_{\Can}}\circ\overset{1}{\imath_{\Can}}$ to project $g$ down to $\mathrm{h}_2\Fun$. By \eqref{example auxiliar}, as announced,
		\[
		\left(\frac{1}{2}\imath_{\Can}\imath_{\Can}\,g\right)(v)=\left(\frac{1}{2}g(\Can,\Can)\right)(v)=\frac{1}{2}g_v(v,v)=\frac{1+\kappa}{2}\varphi_v(v,v)=\frac{1+\kappa}{2}L(v).
		\]
		As a second step, we apply $\underset{1}{\vd}\circ\underset{2}{\vd}$ to go back to $\mathrm{h}_0\Ten^0_2$:
		\[
		\frac{1+\kappa}{2}\left(\vd\vd L\right)_v(-,-)=\left(1+\kappa\right)\varphi_v(-,-).
		\]
	\end{enumerate}
\end{example}

\section{Applications to connection-type objects} \label{section connections}

\subsection{From sprays to anisotropic connections}

The plethora of connections, generalizing the affine ones, that are used in semi-Finsler geometry is well known, see e.g. \cite{KLS}. There have been works establishing relations between the various kinds of objects that generalize the classical connections (\emph{connection-type objects}, from now on) \cite{Dahl,Min14,JSV1}. Still, in these, some of the possible relations might be missing or, at any rate, the different kinds are not treated on an equal formal footing. (For instance, one usually does not think of an spray as a type of connection, but our approach will make apparent that it is always consistent to do so.) We believe that the ladder structure studied in \S \ref{section ladder} makes accessible \emph{all} the interrelations between the classes of interest here. In order to transport the ladder to connection-type objects in place of tensors, we shall first define the affine bundles over $A$ of which the former are sections. Later, we will give a statement that provides the ladder transitions between the levels of sprays, nonlinear connections and anisotropic ones. In particular, we will end up expressing the results of \cite{JSV2} in a more synthetic way and complementing them.

The \emph{symmetrized bundle} \cite[\S 2.4]{Min14} (see also \cite[Prop. 4.1.3]{BM}) is $\mathrm{S}A:=\left\{\xi\in\TT_vA\colon\; v\in A,\;\dd(\pi_A)_v(\xi)=v\right\}$. The \emph{$1$-jet bundle} \cite[\S 12.16]{KMS} can be introduced as\footnote{This identification is so that if $V\colon U\subseteq M\rightarrow A$ is a local section, its $1$-jet prolongation is simply given by $p\in U\mapsto\dd V_p\in\mathrm{J}^1_{V(p)}A\subset\mathrm{Hom}(\TT_pM,\TT_{V(p)}A)$.} $\mathrm{J}^1A=\left\{\eta\in\mathrm{Hom}(\TT_{\pi(v)}M,\TT_v A)\colon\; v\in A,\;\dd(\pi_A)_v\circ\eta=\mathrm{Id}\right\}$. Besides, one can easily construct the \emph{connection bundle} $\mathrm{C}M$, whose sections are the affine connections on $M$. For instance, one can declare two such affine connections $\nabla^1$ and $\nabla^2$ to be \emph{equivalent at $p\in M$} if their Christoffel symbols coincide at $p$, writing then $\left[\nabla^1\right]_{p}=\left[\nabla^2\right]_{p}$. With this, 
\begin{equation}
	\mathrm{C}_pM=\left\{\left[\nabla\right]_{p}\colon \nabla\text{ affine connection on $M$}\right\},\qquad \mathrm{C}M=\underset{p\in M}{\bigcup} \mathrm{C}_pM,
	\label{connection bundle}
\end{equation}
 the latter with the appropriate fiber bundle structure over $M$. (In \cite[\S 2.2]{JSV2} we introduced $\mathrm{C}M$ by directly specifying its transformation cocycle.)

One easily sees that $\mathrm{S}A$ is an affine subbundle of $\mathrm{T}A\rightarrow A$ directed by $\VV A=\left\{\chi\in\TT_vA\colon\; v\in A,\;\dd(\pi_A)_v(\chi)=0\right\}$ or, equiv., by $\pi_A^*(\TT M)$ (recall \eqref{vertical isomorphism}). In the same vein, $\mathrm{J}^1A$ is an affine subbundle of $\mathrm{Hom}(\pi_A^*(\TT M),\TT A)$ directed by $\mathrm{Hom}(\pi_A^*(\TT M),\VV A)\equiv\pi_A^*(\TT^\ast M)\otimes\VV A\equiv\pi_A^*(\TT^\ast M\otimes\TT M)$. As for $\mathrm{C}M$, it is an affine bundle directed by $\TT M\otimes\TT^\ast M\otimes\TT^\ast M\rightarrow M$, so the bundle that we are really interested in, which is $\pi_A^*(\mathrm{C}M)$, is affine directed by $\pi_A^*(\TT M\otimes\TT^\ast M\otimes\TT^\ast)\rightarrow A$. One can compare the following definitions with \cite[Def. 4.1.1]{Shen}, \cite[Def. 5.1.18]{KLS}, \cite[\S 1.1]{AP}, \cite[Def. 2.6]{JSV2}, \cite[Def. 4]{JSV1} and \cite[Def. 2.10]{Jav19} among others.
\begin{definition}
	\,
	\begin{enumerate}
		\item A \emph{spray} is a section $G\colon A\rightarrow\mathrm{S}A$ that is \emph{$2$-homogeneous}, in that if $v\mapsto G_v$, then $\lambda v\mapsto\lambda\,\dd(h_\lambda)_v(G_v)$ for any $\lambda\in\mathbb{R}^+$. We will denote the set of all sprays by $\mathscr{C}_{\mathrm{spr}}(A)$.
		
		\item A \emph{nonlinear connection} is a section $N\colon A\rightarrow\mathrm{J}^1A$. We will always take our nonlinear connections to be \emph{$1$-hom.}, in that if $v\mapsto N_v$, then $\lambda v\mapsto\dd(h_{\lambda})_v\circ N_v$, and we will denote the set of all of them by $\mathscr{C}_{\mathrm{nl}}(A)$. 
		
		\item An \emph{anisotropic connection} is a section $\Gamma\colon A\rightarrow\pi_A^*(\mathrm{C}M)$. We will take these to be \emph{$0$-hom.}, in the sense that if $v\mapsto \Gamma_v\in\mathrm{C}_{\pi(v)}M$, then $\lambda v\mapsto\Gamma_v\left(\in\mathrm{C}_{\pi(\lambda v)}M=\mathrm{C}_{\pi(v)}M\right)$ too, and we will denote the set of all of them by $\mathscr{C}_{\mathrm{anis}}(A)$.
	\end{enumerate}
\end{definition}
Let us write down the local expressions of these objects in natural coordinates on each of the bundles $\mathrm{S}A$, $\mathrm{J}^1A$ and $\pi_A^*(\mathrm{C}M)$. This will clarify the affine bundle structures, the fact that we can recover the objects in their usual appearances (e.g., covariant derivative operators) and how to transport the ladder structure from \S \ref{section ladder}. Given natural coordinates $\left(x^i,\y^i\right)$
and\footnote{\label{footnote sprays} We choose to label the component of $\xi$ on $\left.\partial_{y^i}\right|_v$ as $-2\,\xi^i$ instead of $\xi^i$ in order to maintain the usual convention for sprays. These are usually denoted as $y^i\partial_{x^i}-2\,G^i\partial_{y^i}$. Had we chosen to write them as $y^i\partial_{x^i}`+G^i\partial_{y^i}$, we would have had to change our way of introducing the operator $\vd$ on them accordingly.} 
\[
\xi=\overset{0}{\xi^i}\left.\partial_{x^i}\right|_v-2\,\xi^i\left.\partial_{y^i}\right|_v\in\mathrm{S}_vA\subset\mathrm{S}A,
\]
its defining condition $\dd(\pi_A)_v(\xi)=v$ translates into $\overset{0}{\xi^i}=y^i(v)$. Therefore, a spray $G\in\mathscr{C}_{\mathrm{spr}}(A)$ is expressed as 
\[
G_v=y^i(v)\left.\partial_{x^i}\right|_v-2\,G^i(v)\left.\partial_{y^i}\right|_v\in\mathrm{S}_vA;\qquad G^i(\lambda v)=\lambda^2 G^i(v)
\]
for any $\lambda\in\mathbb{R}^+$. Under a change of chart $\left(x^i,\y^i\right)\rightsquigarrow\left(\wt{x}^i,\wt{y}^i\right)$, one can use the transformation laws of $\partial_{x^i}$ and $\partial_{y^i}$ to find that the corresponding components are 
\begin{equation}
	\wt{G}^i=-\frac{1}{2}\frac{\partial^2\wt{x}^i}{\partial x^b\partial x^c}y^by^c+\frac{\partial\wt{x}^i}{\partial x^a}G^a.
	\label{transformation spray}
\end{equation}
Now, given\footnote{Analogous comments to those of footnote \ref{footnote sprays}. Now, they are in order to maintain the convention of writing the covariant derivative of a local section $V\colon U\rightarrow A$ with respect to $N\in\mathscr{C}_{\mathrm{nl}}(A)$ as $\mathrm{D}V=\left(\frac{\partial V^i}{\partial x^j}+N^i_j(V)\right)\partial_{x^i}\otimes\dd x^j$.}
\[
\eta=\left(\overset{0}{\eta^i_j}\left.\partial_{x^i}\right|_v-\eta^i_j\left.\partial_{y^i}\right|_v\right)\otimes\dd x^j_{\pi(v)}\in\mathrm{J}^1_vA\subset\mathrm{J}^1A,
\]
the condition $\dd(\pi_A)_v\circ\eta=\mathrm{Id}$ translates into $\overset{0}{\eta^i_j}=\delta^i_j$, so a nonlinear connection $N\in\mathscr{C}_{\mathrm{nl}}(A)$ is expressed as
\[
N_v=\left(\delta^i_j\left.\partial_{x^i}\right|_v-N^i_j(v)\left.\partial_{y^i}\right|_v\right)\otimes\dd x^j_{\pi(v)}\in\mathrm{J}^1_vA;\qquad N^i_j(\lambda v)=\lambda N^i_j(v).
\]
This time, under changes $\left(x^i,\y^i\right)\rightsquigarrow\left(\wt{x}^i,\wt{y}^i\right)$, the transformation laws of $\partial_{x^i}$,  $\partial_{y^i}$ and $\dd x^j$ yield
\begin{equation}
	\wt{N}^i_j=-\frac{\partial^2\wt{x}^i}{\partial x^b\partial x^c}\frac{\partial x^b}{\partial \wt{x}^j}y^c+\frac{\partial\wt{x}^i}{\partial x^a}\frac{\partial x^b}{\partial\wt{x}^j}N^a_b.
	\label{transformation nonlinear}
\end{equation} 
Last (recall \eqref{connection bundle}), an anisotropic connection $\Gamma\in\mathscr{C}_{\mathrm{anis}}(A)$ is expressed as 
\[
\Gamma_v=\left[\nabla\right]_{\pi(v)}\in\mathrm{C}_{\pi(v)}M,\quad\left[\nabla\right]_{\pi(v)}\equiv\left(\Gamma^i_{jk}(v)\right);\qquad\Gamma^i_{jk}(v)=\Gamma^i_{jk}(\lambda v),
\]
and the well-known transformation law of the classical Christoffel symbols gives
\begin{equation}
	\wt{\Gamma}^i_{jk}(v)=-\frac{\partial^2\wt{x}^i}{\partial x^b\partial x^c}\frac{\partial x^b}{\partial\wt{x}^j}\frac{\partial x^c}{\partial\wt{x}^k}+\frac{\partial\wt{x}^i}{\partial x^a}\frac{\partial x^b}{\partial\wt{x}^j}\frac{\partial x^c}{\partial\wt{x}^k}\Gamma^a_{bc}.
	\label{transformation anisotropic}
\end{equation}

In the next subsection, we extend the operators $\imath_{\Can}$ and $\vd$ to act between the sets $\mathscr{C}_{\mathrm{spr}}(A)$, $\mathscr{C}_{\mathrm{nl}}(A)$ and $\mathscr{C}_{\mathrm{anis}}(A)$, eventually including $\underset{\mathrm{anis}}{\vd}\colon\mathscr{C}_{\mathrm{anis}}(A)\rightarrow\mathrm{h}_{-1}\Ten^1_3(M_A)$. (In Prop. \ref{main prop}, this operator would correspond to $\underset{0}{\vd}$ and, therefore, would not appear on subsequent the ladder.) The idea will be to locally consider distinguished elements of these affine spaces, thus establishing identifications with their directing vector spaces. Such identifications will allow one to transform 
\[
\xymatrix{ 
	\mathrm{h}_{0}\Ten^1_2(M_A)\ar@/^/[r]^{\overset{1}{\imath_{\Can}}} &  \mathrm{h}_{1}\Ten^1_1(M_A)\ar@/^/[l]^{\underset{1}{\vd}}\ar@/^/[r]^{\frac{1}{2}\overset{2}{\imath_{\Can}}} & \mathrm{h}_{2}\Ten^1_0(M_A)\ar@/^/[l]^{\underset{2}{\vd}}}
\]
(essentially the ladder of Def. \ref{def ladder} labeled by $(r,\omega)=(1,2)$) into 
\begin{equation}
	\xymatrix{ 
		\mathscr{C}_{\mathrm{anis}}(A)\ar@/^/[r]^{\overset{\mathrm{nl}}{\imath_{\Can}}} &  \mathscr{C}_{\mathrm{nl}}(A)\ar@/^/[l]^{\underset{\mathrm{nl}}{\vd}}\ar@/^/[r]^{\overset{\mathrm{spr}}{\imath_{\Can}}} & \mathscr{C}_{\mathrm{spr}}(A).\ar@/^/[l]^{\underset{\mathrm{spr}}{\vd}}}
	\label{ladder connections}
\end{equation}
The distinguished representatives will be induced by each chart $(U,x)$ on $M$. This  highlights the importance of the cocycles \eqref{transformation spray}, \eqref{transformation nonlinear} and \eqref{transformation anisotropic} when put together: their compatibility with $\imath_{\Can}$ (contracting with $y^i$) and $\vd$ (applying $_{\cdot i}$) is totally transparent. 

\subsection{The ladder of connection-type objects}
 
To be precise, let $G^{(U,x)}$, $N^{(U,x)}$ and $\Gamma^{(U,x)}$ denote, resp., the spray, nonlinear connection and anisotropic connection defined on $\left.A\right|_U=A\cap\TT U$ whose components in the natural chart $(\TT U,(x,y))$ are $0$:
\[
G^{(U,x)}=y^i\,\partial_{x^i},\qquad N^{(U,x)}=\delta^i_j\,\partial_{x^i}\otimes\dd x^j,\qquad\Gamma^{(U,x)}\equiv\left\{\Gamma^i_{jk}=0\right\}. 
\]
Keep in mind that, due to our conventions on the different components, when $G\in\mathscr{C}_{\mathrm{spr}}(A)$, $N\in\mathscr{C}_{\mathrm{nl}}(A)$, $\Gamma\in\mathscr{C}_{\mathrm{anis}}(A)$ and $Z=Z^i\,\partial_{x^i}\in\mathrm{h}_2\Ten^1_0(M_A)$, $J=J^i_j\,\partial_{x^i}\otimes\dd x^j\in\mathrm{h}_1\Ten^1_1(M_A)$, $P=P^i_{jk}\,\partial_{x^i}\otimes\dd x^j\otimes\dd x^k\in\mathrm{h}_0\Ten^1_2(M_A)$, we have
\[
G-2Z=y^i\,\partial_{x^i}-2\left(G^i+Z^i\right)\in\mathscr{C}_{\mathrm{spr}}(A),
\]
\[
N-J=\left\{\delta^i_j\,\partial_{x^i}-\left(N^i_j+J^i_j\right)\partial_{y^i}\right\}\otimes\dd x^j\in\mathscr{C}_{\mathrm{nl}}(A),
\]

\[
\Gamma+P\equiv\left\{\Gamma^i_{jk}+P^i_{jk}\right\}\in\mathscr{C}_{\mathrm{anis}}(A).
\]

\begin{proposition} \label{prop connections}
	Let $(U,x)\rightsquigarrow(\wt{U},\wt{x})$ be a change of chart on $M$ and $G\in\mathscr{C}_{\mathrm{spr}}(\left.A\right|_{U\cap\wt{U}})$, $N\in\mathscr{C}_{\mathrm{nl}}(\left.A\right|_{U\cap\wt{U}})$, $\Gamma\in\mathscr{C}_{\mathrm{anis}}(\left.A\right|_{U\cap\wt{U}})$. 
	\begin{enumerate} [label=\it{\roman*)}]
		\item Putting 
		\[
		G=G^{(U,x)}-2Z^{(U,x)}=G^{(\wt{U},\wt{x})}-2Z^{(\wt{U},\wt{x})},
		\]
		\begin{equation}
			N=N^{(U,x)}-J^{(U,x)}=N^{(\wt{U},\wt{x})}-J^{(\wt{U},\wt{x})},
			\label{coherence nonlinear 0}
		\end{equation}
		\[
		\Gamma=\Gamma^{(U,x)}+P^{(U,x)}=\Gamma^{(\wt{U},\wt{x})}+P^{(\wt{U},\wt{x})}
		\]
		for unique $Z^{(U,x)},Z^{(\wt{U},\wt{x})}\in\mathrm{h}_2\Ten^1_0((U\cap\wt{U})_A)$, $J^{(U,x)},J^{(\wt{U},\wt{x})}\in\mathrm{h}_1\Ten^1_1((U\cap\wt{U})_A)$, and $P^{(U,x)},P^{(\wt{U},\wt{x})}\in\mathrm{h}_0\Ten^1_2((U\cap\wt{U})_A)$,
		one has that 
		\begin{equation}
			N^{(U,x)}-\vd Z^{(U,x)}=N^{(\wt{U},\wt{x})}-\vd Z^{(\wt{U},\wt{x})},
			\label{coherence spray}
		\end{equation}
		\begin{equation}
			G^{(U,x)}-\imath_{\Can}J^{(U,x)}=G^{(\wt{U},\wt{x})}-\imath_{\Can}J^{(\wt{U},\wt{x})},\quad\Gamma^{(U,x)}+\vd J^{(U,x)}= \Gamma^{(\wt{U},\wt{x})}+\vd J^{(\wt{U},\wt{x})},
			\label{coherence nonlinear}
		\end{equation}
	
		\begin{equation}
			N^{(U,x)}-\imath_{\Can} P^{(U,x)}=N^{(\wt{U},\wt{x})}-\imath_{\Can} P^{(\wt{U},\wt{x})},\quad \vd P^{(U,x)} = \vd P^{(\wt{U},\wt{x})}.
			\label{coherence anisotropic}
		\end{equation}

		\item \label{prop connections 2} Consequently, there appear well-defined maps $\underset{\mathrm{spr}}{\vd}\colon\mathscr{C}_{\mathrm{spr}}(A)\rightarrow \mathscr{C}_{\mathrm{nl}}(A)$, $\overset{\mathrm{spr}}{\imath_{\Can}}\colon\mathscr{C}_{\mathrm{nl}}(A)\rightarrow\mathscr{C}_{\mathrm{spr}}(A)$, $\underset{\mathrm{nl}}{\vd}\colon\mathscr{C}_{\mathrm{nl}}(A)\rightarrow \mathscr{C}_{\mathrm{anis}}(A)$, $\overset{\mathrm{nl}}{\imath_{\Can}}\colon\mathscr{C}_{\mathrm{anis}}(A)\rightarrow\mathscr{C}_{\mathrm{nl}}(A)$, $\underset{\mathrm{spr}}{\vd}\colon \mathscr{C}_{\mathrm{anis}}(A)\rightarrow\mathrm{h}_{-1}\Ten^1_3(M_A)$. For instance, for $G\in\mathscr{C}_{\mathrm{spr}}(A)$, one locally expresses it as $G^{(U,x)}-2Z^{(U,x)}$ and then, on $\left.A\right|_U$, puts $\underset{\mathrm{spr}}{\vd} G:=N^{(U,x)}-\vd Z^{(U,x)}$. The other maps are defined analogously.
	\end{enumerate}
\end{proposition}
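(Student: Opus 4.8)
The plan is to reduce the entire statement to a single structural observation about the three cocycles \eqref{transformation spray}, \eqref{transformation nonlinear} and \eqref{transformation anisotropic}: that the \emph{inhomogeneous} (coordinate-change) terms of consecutive levels are obtained from one another precisely by $\vd$ and $\imath_{\Can}$. The key first move is that a difference of two sections of the same affine bundle is a section of its directing vector bundle, so that
\[
\Xi := G^{(U,x)}-G^{(\wt{U},\wt{x})},\qquad \Theta := N^{(U,x)}-N^{(\wt{U},\wt{x})},\qquad \Lambda := \Gamma^{(\wt{U},\wt{x})}-\Gamma^{(U,x)}
\]
are honest anisotropic tensors in $\mathrm{h}_2\Ten^1_0$, $\mathrm{h}_1\Ten^1_1$, $\mathrm{h}_0\Ten^1_2$, respectively, and---crucially---they depend only on the change of chart, not on the chosen $G$, $N$ or $\Gamma$. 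Subtracting the two decompositions in each line of part \emph{i)} yields $\Xi=2(Z^{(U,x)}-Z^{(\wt{U},\wt{x})})$, $\Theta=J^{(U,x)}-J^{(\wt{U},\wt{x})}$ and $\Lambda=P^{(U,x)}-P^{(\wt{U},\wt{x})}$, so that \eqref{coherence spray}, \eqref{coherence nonlinear} and \eqref{coherence anisotropic} become \emph{equivalent} to the five algebraic identities
\[
\Theta=\tfrac{1}{2}\vd\Xi,\qquad \Xi=\imath_{\Can}\Theta,\qquad \Lambda=\vd\Theta,\qquad \Theta=\imath_{\Can}\Lambda,\qquad \vd\Lambda=0.
\]

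The second, computational step is to read off the $(U,x)$-components of $\Xi$, $\Theta$, $\Lambda$. Since the distinguished objects $G^{(\wt{U},\wt{x})}$, $N^{(\wt{U},\wt{x})}$, $\Gamma^{(\wt{U},\wt{x})}$ have \emph{vanishing} components in the chart $(\wt{U},\wt{x})$, I would simply solve \eqref{transformation spray}, \eqref{transformation nonlinear} and \eqref{transformation anisotropic} for their $(U,x)$-components (taking due care with the $-2G^i$ and $-N^i_j$ sign conventions of the footnotes and the vertical identification \eqref{vertical isomorphism}), obtaining
\[
\Xi^i=\frac{\partial x^i}{\partial\wt{x}^a}\frac{\partial^2\wt{x}^a}{\partial x^b\partial x^c}\,y^by^c,\qquad \Theta^i_j=\frac{\partial x^i}{\partial\wt{x}^a}\frac{\partial^2\wt{x}^a}{\partial x^j\partial x^c}\,y^c,\qquad \Lambda^i_{jk}=\frac{\partial x^i}{\partial\wt{x}^a}\frac{\partial^2\wt{x}^a}{\partial x^j\partial x^k}.
\]
With these explicit forms the five identities are immediate by inspection: the prefactor $\tfrac{\partial x^i}{\partial\wt{x}^a}\tfrac{\partial^2\wt{x}^a}{\partial x^\bullet\partial x^\bullet}$ is a function of $x$ alone, so applying $\vd=\partial/\partial y^k$ merely lowers the number of $y$-factors by one (sending $\Xi\mapsto 2\Theta$, $\Theta\mapsto\Lambda$ and killing $\Lambda$), while contracting with $\Can$ (multiplying by $y$ on the last index) raises it by one (recovering $\Xi$ from $\Theta$ and $\Theta$ from $\Lambda$). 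This is exactly the ``transparent compatibility'' of the cocycles announced before the statement, and conceptually it is nothing but the algebraic Euler relation \eqref{alg_euler} made visible on the coordinate-change terms.

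Finally, for part \emph{ii)} I would repackage these coherences as gluing statements. Given $G\in\mathscr{C}_{\mathrm{spr}}(A)$, the local prescription $\underset{\mathrm{spr}}{\vd}G:=N^{(U,x)}-\vd Z^{(U,x)}$ on $\left.A\right|_U$ produces a section of $\mathrm{J}^1A$ (a distinguished nonlinear connection minus the tensor $\vd Z^{(U,x)}\in\mathrm{h}_1\Ten^1_1$, hence $1$-homogeneous as required), and \eqref{coherence spray} says precisely that the prescriptions from two overlapping charts agree on $\left.A\right|_{U\cap\wt{U}}$, so they glue to a global nonlinear connection. The remaining maps are handled verbatim: $\overset{\mathrm{spr}}{\imath_{\Can}}$ and $\underset{\mathrm{nl}}{\vd}$ via the two halves of \eqref{coherence nonlinear}, and $\overset{\mathrm{nl}}{\imath_{\Can}}$ together with $\underset{\mathrm{anis}}{\vd}\colon\mathscr{C}_{\mathrm{anis}}(A)\to\mathrm{h}_{-1}\Ten^1_3(M_A)$ via the two halves of \eqref{coherence anisotropic}; in the latter case $\vd\Lambda=0$ shows that $\vd P^{(U,x)}$ is already chart-independent, so no distinguished connection is needed there. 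I expect the only real friction to be bookkeeping rather than conceptual: one must keep straight the sign/scaling conventions and the identification \eqref{vertical isomorphism} so that the tensors $Z^{(U,x)}$, $J^{(U,x)}$, $P^{(U,x)}$ carry exactly the components $G^i$, $N^i_j$, $\Gamma^i_{jk}$ of the original objects. Once that normalization is fixed, everything collapses to the three one-line component computations above.
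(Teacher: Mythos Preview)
Your proposal is correct and follows essentially the same strategy as the paper: both reduce part \emph{i)} to checking that the inhomogeneous parts of the transformation laws \eqref{transformation spray}, \eqref{transformation nonlinear}, \eqref{transformation anisotropic} are carried into one another by $\vd$ and $\imath_{\Can}$, and then glue for part \emph{ii)}. The only difference is organizational: you first isolate the chart-change tensors $\Xi$, $\Theta$, $\Lambda$ (noting they are independent of the particular $G$, $N$, $\Gamma$) and reduce all five coherences to the identities $\Theta=\tfrac{1}{2}\vd\Xi$, $\Xi=\imath_{\Can}\Theta$, $\Lambda=\vd\Theta$, $\Theta=\imath_{\Can}\Lambda$, $\vd\Lambda=0$, which are immediate from the explicit components; the paper instead carries the full $J^{(U,x)}$, $J^{(\wt{U},\wt{x})}$ expressions through a direct verification of \eqref{coherence nonlinear} in the $\wt{x}$-chart and then declares the remaining cases analogous. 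Your packaging makes the ``transparent compatibility'' announced just before the proposition more visible, but the underlying computation is the same.
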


\begin{proof}
	\!
	\begin{enumerate} [label=\it{\roman*)}]
		\item Let us start with \eqref{coherence nonlinear}. First, we will express \eqref{coherence nonlinear 0} in the $\wt{x}$ coordinates. Concretely, with the transformation law \eqref{transformation nonlinear} and the defining property of $N^{(U,x)}$ and $N^{(\wt{U},\wt{x})}$ (i.e., $\left(N^{(U,x)}\right)^i_j=0$ and $\wt{\left(N^{(\wt{U},\wt{x})}\right)}^i_j=0$):
		\[
		J^{(U,x)}=\left(N^{(U,x)}-N^{(\wt{U},\wt{x})}\right)+J^{(\wt{U},\wt{x})}=\left\{\left(\frac{\partial^2\wt{x}^i}{\partial x^b\partial x^c}\frac{\partial x^b}{\partial \wt{x}^j}y^c-0\right)+\wt{\left(J^{(\wt{U},\wt{x})}\right)}^i_j\right\}\partial_{\wt{x}^i}\otimes\dd\wt{x}^j.
		\]
		Then, using this and also \eqref{transformation spray} for $G^{(U,x)}$,
		\[
		\begin{split}
			G^{(U,x)}-\imath_{\Can}J^{(U,x)}&=y^i\,\partial_{x^i}-\left\{2\cdot 0+\left(J^{(U,x)}\right)^i_ay^a\right\}\partial_{y^i}\\ &=\wt{y}^i\,\partial_{\wt{x}^i}-\left\{2\wt{\left(G^{(U,x)}\right)}^i+\wt{\left(J^{(U,x)}\right)}^i_a\wt{y}^a\right\}\partial_{\wt{y}^i}\\
			&= 
			\wt{y}^i\,\partial_{\wt{x}^i}-\left\{-\frac{\partial^2\wt{x}^i}{\partial x^b\partial x^c}y^by^c+\frac{\partial^2\wt{x}^i}{\partial x^b\partial x^c}\frac{\partial x^b}{\partial \wt{x}^a}y^c\wt{y}^a+\wt{\left(J^{(\wt{U},\wt{x})}\right)}^i_a\wt{y}^a\right\}\partial_{\wt{y}^i} \\
			&=\wt{y}^i\,\partial_{\wt{x}^i}-\left\{2\cdot 0+\wt{\left(J^{(\wt{U},\wt{x})}\right)}^i_a\wt{y}^a\right\}\partial_{\wt{y}^i} \\
			&=G^{(\wt{U},\wt{x})}-\imath_{\Can}J^{(\wt{U},\wt{x})}.
		\end{split}
		\]
		On the other hand, using again the above identity and also \eqref{transformation anisotropic} for $\Gamma^{(U,x)}$,
		\[
		\begin{split}
			\Gamma^{(U,x)}+\vd J^{(U,x)}&\equiv\left\{0+\left(J^{(U,x)}\right)^i_{j\,\cdot k}\right\} \\
			&\equiv\left\{\wt{\left(\Gamma^{(U,x)}\right)}^i_{jk}+\wt{\left(J^{(U,x)}\right)}^i_{j\,\cdot k}\right\} \\
			&=\left\{-\frac{\partial^2\wt{x}^i}{\partial x^b\partial x^c}\frac{\partial x^b}{\partial\wt{x}^j}\frac{\partial x^c}{\partial\wt{x}^k}+\left(\frac{\partial^2\wt{x}^i}{\partial x^b\partial x^c}\frac{\partial x^b}{\partial \wt{x}^j}y^c\right)_{\cdot k}+\wt{\left(J^{(\wt{U},\wt{x})}\right)}^i_{j\,\cdot k}\right\} \\
			&= \left\{0+\wt{\left(J^{(\wt{U},\wt{x})}\right)}^i_{j\,\cdot k}\right\} \\
			&\equiv \Gamma^{(\wt{U},\wt{x})}+\vd J^{(\wt{U},\wt{x})}.
		\end{split}
		\]
		
		With the analogous computations, one expresses $Z^{(U,x)}$ in terms of $Z^{(\wt{U},\wt{x})}$ in the $\wt{x}$ and combines this with \eqref{transformation nonlinear} to obtain \eqref{coherence spray}. Finally, one expresses $P^{(U,x)}$ in terms of $P^{(\wt{U},\wt{x})}$ and, with \eqref{transformation anisotropic}, obtains \eqref{coherence anisotropic}.
		
		\item Note that \eqref{coherence spray} is just expressing that $\left.\underset{\mathrm{spr}}{\vd} G\right|_{A\cap\TT U}=N^{(U,x)}-\vd Z^{(U,x)}$ is independent of the chart chosen to express $G$. Consequently, all the obtained $\left.\underset{\mathrm{spr}}{\vd}G\right|_{A\cap\TT U}$ glue together smoothly to define $\underset{\mathrm{spr}}{\vd}G$ on all of $A$. In the same vein, \eqref{coherence nonlinear} establishes the well-definedness of $\overset{\mathrm{spr}}{\imath_{\Can}} N$ and $\underset{\mathrm{nl}}{\vd}N$, and \eqref{coherence anisotropic}, that of $\overset{\mathrm{nl}}{\imath_{\Can}}\Gamma$ and $\underset{\mathrm{anis}}{\vd}\Gamma$ (here $N\in\mathscr{C}_{\mathrm{nl}}(A)$ and $\Gamma\in\mathscr{C}_{\mathrm{anis}}(A)$).
	\end{enumerate}
\end{proof}

\begin{remark}
	By construction, it is clear that the operators defined in Prop. \ref{prop connections} \ref{prop connections 2} work just by contracting with $y^i$ or taking the $_{\cdot i}$ of the different components $G^i$, $N^i_j$ or $\Gamma^i_{jk}$. It is due to their coordinate expressions that we see  we are recovering the classical constructions. To be precise: 
	\begin{itemize}
		\item $\vd\colon\mathscr{C}_{\mathrm{spr}}(A)\rightarrow\mathscr{C}_{\mathrm{nl}}(A)$, $\vd G=\left(\delta^i_j\,\partial_{x^i}-G^i_{\cdot j}\,\partial_{y^i}\right)\otimes\dd x^j$, recovers, for example, \cite[Prop. 7.3.4]{KLS}, \cite[(7.9)]{Shen}, \cite[Th. 4.2.1]{BM},  \cite[(22)]{Min14}.
		
		\item $\imath_{\Can}\colon\mathscr{C}_{\mathrm{nl}}(A)\rightarrow\mathscr{C}_{\mathrm{spr}}(A)$, $\imath_{\Can}N=y^i\,\partial_{x^i}-N^i_ay^a\,\partial_{y^i}$, recovers, f. ex., \cite[Lem. and Def. 7.2.13]{KLS}, \cite[Th. 4.2.2]{BM}, \cite[(21)]{Min14}.
		
		\item  $\vd\colon\mathscr{C}_{\mathrm{nl}}(A)\rightarrow\mathscr{C}_{\mathrm{anis}}(A)$, $\vd N\equiv\left\{N^i_{j\,\cdot k}\right\}$, recovers, f. ex., \cite[Prop. and Def. 7.1.7]{KLS}, \cite[Th. 3.2.1]{BM}, \cite[\S 2-7]{Jav19}, \cite[(32)]{Min14}.
		
		\item $\imath_{\Can}\colon\mathscr{C}_{\mathrm{anis}}(A)\rightarrow\mathscr{C}_{\mathrm{nl}}(A)$, $\imath_{\Can}\Gamma=\left(\delta^i_j\,\partial_{x^i}-\Gamma^i_{ja}y^a\,\partial_{y^i}\right)\otimes\dd x^j$, recovers \cite[\S 3.1]{Jav19}.
		
		\item  $\vd\colon\mathscr{C}_{\mathrm{anis}}(A)\rightarrow\mathrm{h}_{-1}\Ten^1_2(M_A)$, $\vd \Gamma= \Gamma^i_{jk\,\cdot l}\,\partial_{x^i}\otimes\dd x^j\otimes\dd x^k\otimes\dd x^l$, recovers \cite[(35)]{Jav19}.
	\end{itemize}
(Cf. also \cite[Def. 2.8]{JSV2}, where we introduced $\overset{\mathrm{nl}}{\imath_{\Can}}$ and $\underset{\mathrm{nl}}{\vd}$.)
\end{remark}

Now that we have all the operators of the ladder \eqref{ladder connections}, let us check that they provide results analogous to Prop. \ref{main prop} and Th. \ref{main th}.
\begin{corollary} \label{corollary connections}
	One has that
	\begin{equation}
		\overset{\mathrm{spr}}{\imath_{\Can}}\circ\underset{\mathrm{spr}}{\vd}=\mathrm{Id}_{\mathscr{C}_{\mathrm{spr}}(A)},\quad\overset{\mathrm{nl}}{\imath_{\Can}}\circ\underset{\mathrm{nl}}{\vd}=\mathrm{Id}_{\mathscr{C}_{\mathrm{nl}}(A)},\quad\overset{\mathrm{0}}{\imath_{\Can}}\circ\underset{\mathrm{anis}}{\vd}=0\colon\mathscr{C}_{\mathrm{anis}}(A)\longrightarrow\mathrm{h}_0\Ten^1_2(M_A).
		\label{euler connections}
	\end{equation}
	As consequences, $\underset{\mathrm{spr}}{\vd}$ and $\underset{\mathrm{nl}}{\vd}$ are injective, $\overset{\mathrm{spr}}{\imath_{\Can}}$ and $\overset{\mathrm{nl}}{\imath_{\Can}}$ are surjective, and the following decompositions\footnote{We will also use the direct sum symbol $\oplus$ between a subspace of an affine space and one of the corresponding vector space. To be precise, in \ref{cor connnections 1} we mean that each $N\in\mathscr{C}_{\mathrm{nl}}(A)$ can be uniquely expressed as $N=\vd G-J$ for certain $G\in\mathscr{C}_{\mathrm{spr}}(A)$ and $J\in\mathrm{Ker}(\overset{2}{\imath_{\Can}})$; analogously in \ref{cor connnections 2}.} hold:
	\begin{enumerate} [label=\it{\roman*)}]
		\item \label{cor connnections 1}
		\[
		\begin{split}
			\mathscr{C}_{\mathrm{nl}}(A)&=\mathrm{Img}(\underset{\mathrm{spr}}{\vd})\oplus\mathrm{Ker}(\overset{2}{\imath_{\Can}}\colon\mathrm{h}_1\Ten^1_1(M_A)\rightarrow\mathrm{h}_2\Ten^1_0(M_A)), \\
			 N&=\underset{\mathrm{spr}}{\vd}(\overset{\mathrm{spr}}{\imath_{\Can}}N)+\left\{N-\underset{\mathrm{spr}}{\vd}(\overset{\mathrm{spr}}{\imath_{\Can}}N)\right\}.
		\end{split}
		\]
		
		\item \label{cor connnections 2}
		\[
		\begin{split}
			\mathscr{C}_{\mathrm{anis}}(A)&=\mathrm{Img}(\underset{\mathrm{nl}}{\vd})\oplus\mathrm{Ker}(\overset{1}{\imath_{\Can}}\colon\mathrm{h}_0\Ten^1_2(M_A)\rightarrow\mathrm{h}_1\Ten^1_1(M_A)), \\
			\Gamma&=\underset{\mathrm{nl}}{\vd}(\overset{\mathrm{nl}}{\imath_{\Can}}\Gamma)+\left\{\Gamma-\underset{\mathrm{nl}}{\vd}(\overset{\mathrm{nl}}{\imath_{\Can}}\Gamma)\right\}.
		\end{split}
		\]
	\end{enumerate}
\end{corollary}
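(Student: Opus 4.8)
The plan is to reduce the three identities \eqref{euler connections} to Euler's theorem \eqref{euler}, working in a single natural chart. Since Prop.~\ref{prop connections} already guarantees that all five operators are well-defined global objects, and since each acts simply by contracting a component with $y^i$ or by applying $_{\cdot i}$ (as recorded in the Remark following that proposition), it suffices to check the identities on the coordinate components; this parallels the way \eqref{alg_euler} was obtained from \eqref{euler}.

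First I would verify the identities one level at a time. For a spray $G=y^i\partial_{x^i}-2G^i\partial_{y^i}$, the coefficients $G^i$ are $2$-homogeneous, so $\underset{\mathrm{spr}}{\vd}G$ has nonlinear components $G^i_{\cdot j}$ and hence $\overset{\mathrm{spr}}{\imath_{\Can}}\underset{\mathrm{spr}}{\vd}G$ has spray coefficient $\frac{1}{2}G^i_{\cdot a}y^a$; by \eqref{euler} this is $\frac{1}{2}\cdot 2\,G^i=G^i$, so the composite is $\mathrm{Id}$. The factor $\frac{1}{2}$ forced by the convention $-2G^i$ (footnote on sprays) cancels the homogeneity weight $2$, which is precisely why one obtains $\mathrm{Id}$ and not $2\,\mathrm{Id}$. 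Identically, a nonlinear connection has $1$-homogeneous components $N^i_j$, so $\overset{\mathrm{nl}}{\imath_{\Can}}\underset{\mathrm{nl}}{\vd}N$ has components $N^i_{j\,\cdot a}y^a=1\cdot N^i_j$; and an anisotropic connection has $0$-homogeneous components $\Gamma^i_{jk}$, so $\overset{0}{\imath_{\Can}}\underset{\mathrm{anis}}{\vd}\Gamma$ has components $\Gamma^i_{jk\,\cdot l}y^l=0\cdot\Gamma^i_{jk}=0$. Thus \eqref{euler connections} is just Euler's theorem at weights $2$, $1$ and $0$.

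The consequences are then formal. The first two identities present $\overset{\mathrm{spr}}{\imath_{\Can}}$ and $\overset{\mathrm{nl}}{\imath_{\Can}}$ as one-sided (left) inverses of $\underset{\mathrm{spr}}{\vd}$ and $\underset{\mathrm{nl}}{\vd}$, giving injectivity of the latter and surjectivity of the former. For the decompositions I would transcribe the argument of Prop.~\ref{main prop} into the affine setting. The key observation is that on \emph{differences} of connections the affine maps $\overset{\mathrm{spr}}{\imath_{\Can}}$ and $\overset{\mathrm{nl}}{\imath_{\Can}}$ act through their linear parts $\frac{1}{2}\overset{2}{\imath_{\Can}}$ and $\overset{1}{\imath_{\Can}}$ respectively, as the same coordinate formulas show (consistently with the identification of \eqref{ladder connections} with the tensor ladder). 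Given $N\in\mathscr{C}_{\mathrm{nl}}(A)$, set $G:=\overset{\mathrm{spr}}{\imath_{\Can}}N\in\mathscr{C}_{\mathrm{spr}}(A)$ and $\Delta:=N-\underset{\mathrm{spr}}{\vd}G$, which lies in the directing vector space $\mathrm{h}_1\Ten^1_1(M_A)$. Applying $\overset{\mathrm{spr}}{\imath_{\Can}}$ to $N=\underset{\mathrm{spr}}{\vd}G+\Delta$ and using $\overset{\mathrm{spr}}{\imath_{\Can}}\underset{\mathrm{spr}}{\vd}=\mathrm{Id}$ yields $G=G+\frac{1}{2}\overset{2}{\imath_{\Can}}\Delta$, whence $\Delta\in\mathrm{Ker}(\overset{2}{\imath_{\Can}})$; uniqueness follows because any summand in $\mathrm{Ker}(\overset{2}{\imath_{\Can}})$ is annihilated by the linear part, forcing the $\mathscr{C}_{\mathrm{spr}}(A)$-component to be $\overset{\mathrm{spr}}{\imath_{\Can}}N$. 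This proves \ref{cor connnections 1}, and \ref{cor connnections 2} is verbatim with $(\underset{\mathrm{nl}}{\vd},\overset{\mathrm{nl}}{\imath_{\Can}},\overset{1}{\imath_{\Can}})$ in place of $(\underset{\mathrm{spr}}{\vd},\overset{\mathrm{spr}}{\imath_{\Can}},\frac{1}{2}\overset{2}{\imath_{\Can}})$.

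I expect the only delicate point to be the affine-versus-vector bookkeeping in the decompositions: one must keep $\Delta$ in the directing vector space $\mathrm{h}_1\Ten^1_1(M_A)$ (resp. $\mathrm{h}_0\Ten^1_2(M_A)$) while $G$ lives in the affine space of sprays (resp. $\Gamma$ among anisotropic connections), and track the convention factor $\frac{1}{2}$ carried by the linear part of $\overset{\mathrm{spr}}{\imath_{\Can}}$. Beyond this, everything is a direct transcription of Prop.~\ref{main prop}, since the connection-level operators act on components exactly as their tensor-level counterparts.
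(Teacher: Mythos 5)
Your proposal is correct and follows essentially the same route as the paper: both verify \eqref{euler connections} by reducing to Euler's theorem (you on coordinate components, the paper on the chart representatives $G^{(U,x)}-2Z^{(U,x)}$ via \eqref{alg_euler}), and both obtain the decompositions by viewing $\overset{\mathrm{spr}}{\imath_{\Can}}$, $\overset{\mathrm{nl}}{\imath_{\Can}}$ as affine maps over linear parts and running the existence-plus-uniqueness argument of Prop.~\ref{main prop}. The only discrepancy is notational: you identify affine differences with component differences, getting linear parts $\frac{1}{2}\overset{2}{\imath_{\Can}}$ and $+\overset{1}{\imath_{\Can}}$, while the paper uses literal section differences (through \eqref{vertical isomorphism}), getting $\overset{2}{\imath_{\Can}}$ and $-\overset{1}{\imath_{\Can}}$; since the relevant kernels are insensitive to these nonzero scalar factors, both bookkeepings yield the same decompositions.
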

\begin{proof}
	One can prove \eqref{euler connections} directly from the definitions of the operators in Prop. \ref{prop connections} \ref{prop connections 2}. For instance, given $G\in\mathscr{C}_{\mathrm{spr}}(A)$ expressed locally as $G=G^{(U,x)}-2Z^{(U,x)}$, 
	\[
	\underset{\mathrm{spr}}{\vd} G=N^{(U,x)}-\underset{2}{\vd} Z^{(U,x)},
	\]
	\[
	\overset{\mathrm{spr}}{\imath_{\Can}}(\underset{\mathrm{spr}}{\vd} G)=G^{(U,x)}-\overset{2}{\imath_{\Can}}(\underset{2}{\vd} Z^{(U,x)})=G^{(U,x)}-2Z^{(U,x)}=G
	\]
	 by applying \eqref{alg_euler} for $\alpha=2$. Analogously for the other two identities of \eqref{euler connections}. Once these are established, the injectivity and surjectivity in the statement become clear.
	 
	 In order to prove \ref{cor connnections 1} and \ref{cor connnections 2}, one first checks that $\overset{\mathrm{spr}}{\imath_{\Can}}$ and $\overset{\mathrm{nl}}{\imath_{\Can}}$ are  affine maps over the linear ones $\overset{2}{\imath_{\Can}}$ and $-\overset{1}{\imath_{\Can}}$ resp. (the latter coefficient results from our conventions). Then, writing e.g. $N\in\mathscr{C}_{\mathrm{nl}}(A)$ as 
	 $N=\underset{\mathrm{spr}}{\vd}G+J$ with $J\in\Ker(\overset{2}{\imath_{\Can}})$, one obtains that
	 \[
	 \overset{\mathrm{spr}}{\imath_{\Can}}N=\overset{\mathrm{spr}}{\imath_{\Can}}(\underset{\mathrm{spr}}{\vd}G)+\overset{2}{\imath_{\Can}}J=G,\qquad J=N-\underset{\mathrm{spr}}{\vd}G=N-\underset{\mathrm{spr}}{\vd}(\overset{\mathrm{spr}}{\imath_{\Can}}N).
	 \]
	 The fact that always $N-\underset{\mathrm{spr}}{\vd}(\overset{\mathrm{spr}}{\imath_{\Can}}N)\in\Ker(\overset{2}{\imath_{\Can}})$ results from the following computation:
	 \[
	 \overset{2}{\imath_{\Can}}\circ\left(\mathrm{Id}_{\mathscr{C}_{\mathrm{nl}}(A)}-\underset{\mathrm{spr}}{\vd}\circ\overset{\mathrm{spr}}{\imath_{\Can}}\right)=\overset{\mathrm{spr}}{\imath_{\Can}}\circ\mathrm{Id}_{\mathscr{C}_{\mathrm{nl}}(A)}-\overset{\mathrm{spr}}{\imath_{\Can}}\circ\underset{\mathrm{spr}}{\vd}\circ\overset{\mathrm{spr}}{\imath_{\Can}}=\overset{\mathrm{spr}}{\imath_{\Can}}-\overset{\mathrm{spr}}{\imath_{\Can}}=0
	 \]
	(cf. the proof of Prop. \ref{main prop}). This proves \ref{cor connnections 1}; the decomposition \ref{cor connnections 2} is analogous.
 
\end{proof}

Cor. \ref{corollary connections} allows one to write
\begin{equation}
	\mathscr{C}_{\mathrm{anis}}(A)\equiv\mathscr{C}_{\mathrm{nl}}(A)\times\mathrm{Ker}(\overset{1}{\imath_{\Can}})
	\label{referencia descomposicion}
\end{equation}
and
\[
\mathscr{C}_{\mathrm{nl}}(A)\equiv\mathscr{C}_{\mathrm{spr}}(A)\times\mathrm{Ker}(\overset{2}{\imath_{\Can}}),
\] 
obtaining the corresponding \emph{residues} $\Delta$ of any nonlinear connection and any anisotropic one. Their explicit definition would be practically as in Def. \ref{residues} and Th. \ref{main th}, so we will not repeat ourselves. Instead, let us illustrate the importance of this notion with a couple familiar examples.
\begin{example}
	The residue of $N\in\mathscr{C}_{\mathrm{nl}}(A)$ with respect to $\mathscr{C}_{\mathrm{spr}}(A)$ is precisely its \emph{torsion}, definable as the antisymmetric part of its vertical derivative: $\mathrm{Tor}^i_{jk}:=N^i_{j\,\cdot k}-N^i_{k\,\cdot j}$ Indeed, \cite[Prop. 4 (4)]{JSV1} shows that 
	\[
	\Delta^i_j=\frac{1}{2}\mathrm{Tor}^i_{ja}y^a,
	\]
	but $\Delta$ also determines $\mathrm{Tor}$:
	\[
	\mathrm{Tor}^i_{jk}=\left(\underset{0}{N_{\cdot j}^i}+\Delta^i_j\right)_{\cdot k}-\left(\underset{0}{N_{\cdot k}^i}+\Delta^i_k\right)_{\cdot j}=\underset{0}{N_{\cdot j\cdot k}^i}+\Delta^i_{j\,\cdot k}-\underset{0}{N_{\cdot k\cdot j}^i}-\Delta^i_{k\,\cdot j}=\Delta^i_{j\,\cdot k}-\Delta^i_{k\,\cdot j}.
	\]
	This is an instance of a bijective correspondence between $\mathrm{Ker}(\overset{2}{\imath_{\Can}})$ and the elements of $\mathrm{h}_0\Ten^1_2$ with certain symmetries on their vertical derivative, cf. Rem. \ref{characterization remark}.
\end{example} 
\begin{example}
	Let $\Gamma^{\mathrm{Ch}}$ denote the Chern anisotropic connection of a semi-Finsler Lagrangian $L\in\mathscr{M}_{\mathrm{s-F}}(A)$ \cite[Th. 4]{JSV1} (see also \cite[\S 2.6]{Jav19}). Then, its residue with respect to $\mathscr{C}_{\mathrm{nl}}(A)$ is the \emph{Landsberg tensor}. Indeed, this, among other ways \cite[(26)]{Jav19}, can be defined as the difference $\mathrm{Lan}:=\Gamma^{\mathrm{Ch}}-\Gamma^{\mathrm{Ber}}$, where $\Gamma^{\mathrm{Ber}}$ is the Berwald anisotropic connection, and it is well known that $\mathrm{Lan}^i_{ja}y^a=0$. Moreover, $\overset{\mathrm{nl}}{\imath_{\Can}}(\Gamma^{\mathrm{Ber}})=\mathring{N}$ and $\underset{\mathrm{nl}}{\imath_{\Can}}(\mathring{N})=\Gamma^{\mathrm{Ber}}$, where $\mathring{N}$ is the canonical nonlinear connection of $L$. With this information, the residue is computable trivially: 
	\[
	\begin{split}
		\Delta=\Gamma^{\mathrm{Ch}}-\underset{\mathrm{nl}}{\vd}(\overset{\mathrm{nl}}{\imath_{\Can}}(\Gamma^{\mathrm{Ch}}))&\equiv\left(\Gamma^{\mathrm{Ch}}\right)^i_{jk}-\left\{\left(\Gamma^{\mathrm{Ch}}\right)^i_{ja}y^a\right\}_{\cdot k} \\
		&=\left(\Gamma^{\mathrm{Ber}}\right)^i_{jk}+\mathrm{Lan}^i_{jk}-\left\{\left(\Gamma^{\mathrm{Ber}}\right)^i_{ja}y^a\right\}_{\cdot k} \\
		&=\left(\Gamma^{\mathrm{Ber}}\right)^i_{jk}+\mathrm{Lan}^i_{jk}-\left(\Gamma^{\mathrm{Ber}}\right)^i_{jk} \\
		&= \mathrm{Lan}^i_{jk}.
	\end{split}
	\]
\end{example}
\section{Including linear connections} \label{section linear}

It remains to add to our treatment and to \eqref{ladder connections} a next level of connections above the anisotropic ones. It is one of the most classical classes in semi-Finsler geometry: that of the linear connections on the vertical bundle $\VV A\rightarrow A$ \cite[\S 1.2]{AP}. First, a couple terminological points: 
\begin{itemize}
	\item We will avoid the name \emph{Finslerian connections}. This is because for some authors \cite{Dahl}, this means the data of a connection on $\VV A$ together with a nonlinear one $N$. Here, we wish to emphasize the independence of the constructions with respect to particular choices of $N$.
	
	\item For consistency, we will keep in mind the isomorphism \eqref{vertical isomorphism} and, in practice, will denote everything in terms of $\pi^*_A(\TT M)$ rather than $\VV A$.
	
	\item For $\mathcal{X}\in\mathfrak{X}(A)$, we will take the definition of being \emph{homogeneous of degree $\alpha\in\mathbb{R}$} from\footnote{Notice the notational differences: the $\alpha$ of \cite[(22)]{HPV2} would be our $\lambda$. Also, notice that in \cite[\S 5.1]{JSV1}, we chose a different convention for naming the homogeneity of elements of $\mathfrak{X}(A)$.
		 
		 About our terminology for homogeneous linear connections, recall that in \cite{JSV1} we called them just \emph{invariant by homotheties}. We prefer not to choose between the terms \emph{$0$-homogeneous} and \emph{$\left(-1\right)$-hom.}, as these connections have components of both kinds and there may be further arguments to highlight either of them.} \cite[Def. 6]{HPV2}. By $\mathrm{h}_{\alpha}\mathfrak{X}(A)\subset\mathfrak{X}(A)$, we will indicate the set of all the $\mathcal{X}$'s that are $\alpha$-homogeneous.
\end{itemize}
A \emph{linear connection} on any vector bundle $E$ over $A$ is an operator $\hat{\nabla}\colon\mathfrak{X}(A)\times\varGamma(E)\rightarrow\varGamma(E)$ satisfying the standard properties of $\Fun(A)$-linearity in the first variable and $\mathbb{R}$-linearity plus Leibniz rule in the second one. In what follows, linear connections will be defined on $E=\pi^*_A(\TT M)$, whose set of sections is $\varGamma(E)=\Ten_0^1(M_A)$, and will be assumed to be \emph{homogeneous}, meaning that they have well-defined restrictions $\hat{\nabla}\colon\mathrm{h}_{\alpha}\mathfrak{X}(A)\times\mathrm{h}_{\beta}\Ten_0^1(M_A)\rightarrow\mathrm{h}_{\alpha+\beta}\Ten_0^1(M_A)$ for each $\alpha,\beta\in\mathbb{R}$. 

We will denote the set of all these linear connections by $\mathscr{C}_{\mathrm{lin}}(A)$. Their definition as Koszul operators $\hat{\nabla}$ was taken for simplicity, but an affine bundle $\mathrm{C}(\VV A)\rightarrow A$ of which they are sections $\hat{\Gamma}$ could be constructed, by following a procedure analogous to that of \eqref{connection bundle}.

\subsection{Using an auxiliary nonlinear connection}

What follows is really a summary the results of \cite[\S 5 and 6]{JSV1} for linear connections. These can be localized, so determining $\hat{\nabla}$ is equivalent to determining its coefficients on each natural chart $(A\cap\TT U,(x,y))$, namely
\begin{equation}
	\left(\hat{\Gamma}^1\right)^i_{jk}\partial_{x^i}:=\hat{\nabla}_{\partial_{x^j}}\partial_{x^k},\qquad\left(\hat{\Gamma}^2\right)^i_{jk}\partial_{x^i}:=\hat{\nabla}_{\partial_{y^j}}\partial_{x^k}.
	\label{linear connection coords}
\end{equation}
In this subsection, we assume that a nonlinear connection $N\in\mathscr{C}_{\mathrm{nl}}(A)$ is given. In this case, one can change the local basis $\left\{\partial_{x^i},\partial_{y^i}\right\}$  of $\mathfrak{X}(A)$ to $\left\{\delta_{x^i},\partial_{y^i}\right\}$, where
\[
\delta_{x^i}=\frac{\delta}{\delta x^i}:=\frac{\partial}{\partial x^i}-N^a_i\frac{\partial}{\partial y^a}.
\]
Here, $\left\{\partial_{y^i}\right\}$ is a basis for sections of $\VV A$ and $\left\{\delta_{x^i}\right\}$ is one for the sections of $\HH A$, this being the \emph{horizontal bundle} \cite[(13)]{JSV1}. Thus, $\hat{\nabla}$ is equivalent to the pair $(\Gamma,\Delta)$, where 
\begin{equation}
	\Gamma^i_{jk}\,\partial_{x^i}:=\hat{\nabla}_{\delta_{x^j}}\partial_{x^k},\qquad \Delta^i_{ jk}\,\partial_{x^i}:=\hat{\nabla}_{\partial_{y^j}}\partial_{x^k}=\left(\hat{\Gamma}^2\right)^i_{jk}\partial_{x^i}.
	\label{indentification linear 0}
\end{equation}
It is known that $\Gamma$ (the \emph{horizontal part of $\hat{\nabla}$ according to $N$}) is an anisotropic connection and $\Delta$ (its \emph{vertical part}, called \emph{vertical deviation} in \cite[Lem. and Def. 6.2.24]{KLS}) is a $\left(-1\right)$-homogeneous anisotropic tensor. Indeed, one can see \cite[Prop. 3]{JSV1}, where $\Gamma$ is denoted $\Gamma^\HH$ and $\Delta$ is denoted $\Gamma^\VV$; the linear connections with $\Delta=0$ are called \emph{vertically trivial} there (and \emph{vertically natural} in \cite{KLS}). The later role of $\Delta$, analogous to the objects of Def. \ref{residues}, will justify the following.
\begin{definition}
	Given $\hat{\nabla}\in\mathscr{C}_{\mathrm{lin}}(A)$, we call the corresponding $\Delta$ in \eqref{indentification linear 0} the \emph{residue} of $\hat{\nabla}$ with respect to $\mathscr{C}_{\mathrm{anis}}(A)$.
\end{definition}

Let us collect a couple computations for future referencing:
\begin{enumerate}
	\item The relation between the two ways of  expressing $\hat{\nabla}$ is as follows:
	\begin{equation}
		\Gamma^i_{jk}=\hat{\nabla}_{\partial_{x^j}}\partial_{x^k}-N_j^b\,\hat{\nabla}_{y^b}\partial_{x^k}=\left(\hat{\Gamma}^1\right)^i_{jk}-N^b_j\Delta_{ bk}^i.
		\label{relation gamma and hat}
	\end{equation}
	
	\item For $\mathcal{X}=\overset{1}{\mathcal{X}^i}\,\partial_{x^i}+\overset{2}{\mathcal{X}^i}\,\partial_{y^i}=X^i\,\delta_{x^i}+Y^i\,\partial_{y^i}\in \mathfrak{X}(A)$ and $Z=Z^i\,\partial_{x^i}\in\Ten_0^1(M_A)$,
	\begin{equation}
		\begin{split}
			\hat{\nabla}_{\mathcal{X}}Z&=\left\{\overset{1}{\mathcal{X}^j}\frac{\partial Z^i}{\partial x^j}+\left(\hat{\Gamma}^1\right)^i_{jc}\overset{1}{\mathcal{X}^j}Z^c+\overset{2}{\mathcal{X}^j}\frac{\partial Z^i}{\partial y^j}+\left(\hat{\Gamma}^2\right)^i_{jc}\overset{2}{\mathcal{X}^j}Z^c\right\}\partial_{x^i}\\
			&=\left\{X^j\left(\frac{\delta Z^i}{\delta x^j}+\Gamma^i_{jc}Z^c\right)+Y^j\left(\frac{\partial Z^i}{\partial y^j}+\Delta^i_{ jc}Z^c\right)\right\}\partial_{x^i} \\
			&\in\Ten_0^1(M_A).
		\end{split}
	\label{hat nabla}
	\end{equation}

\end{enumerate}
\begin{proposition} \label{prop linear}
	There is a natural map
	\begin{equation}
		\hat{\nabla}\in\mathscr{C}_{\mathrm{lin}}(A)\longmapsto \Delta\in\mathrm{h}_{-1}\Ten^1_{2}(M_A)
		\label{second projection linear}
	\end{equation}
	given by the second part of \eqref{indentification linear 0}. Furthermore:
	\begin{enumerate} [label=\it{\roman*)}]
		\item \label{prop linear 1} In the presence of the fixed nonlinear connection $N$, there appears an identification
		\begin{equation}
			\begin{matrix}
				\mathscr{C}_{\mathrm{lin}}(A)&\equiv&\mathscr{C}_{\mathrm{anis}}(A)\times\mathrm{h}_{-1}\Ten^1_{2}(M_A), \\
				\hat{\nabla}&\equiv&(\Gamma,\Delta),
				
			\end{matrix}
		\label{identification linear}
		\end{equation}
		given by the entirety of \eqref{indentification linear 0}. Under it, \eqref{second projection linear} becomes the second projection. 
		
		\item \label{prop linear 2} The restriction of \eqref{identification linear}
		\begin{equation}
				\begin{matrix}
				\left\{\hat{\nabla}\in\mathscr{C}_{\mathrm{lin}}(A)\colon \Delta=0\right\}&\equiv&\mathscr{C}_{\mathrm{anis}}(A)\times\left\{0\right\} &\equiv&\mathscr{C}_{\mathrm{anis}}(A), \\
				\hat{\nabla}&\equiv&(\Gamma,0)&\equiv&\Gamma,
			\end{matrix}
		\label{vertically trivial}
		\end{equation}
		turns out to be independent of the chosen $N$.
	\end{enumerate} 
\end{proposition}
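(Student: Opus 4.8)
The plan is to verify three things in turn: that the assignment \eqref{second projection linear} is a well-defined \emph{natural} map (i.e.\ that $\Delta$ is a genuine $\left(-1\right)$-homogeneous tensor and does not involve $N$), that fixing $N$ turns \eqref{indentification linear 0} into a bijection onto $\mathscr{C}_{\mathrm{anis}}(A)\times\mathrm{h}_{-1}\Ten^1_{2}(M_A)$, and that the locus $\left\{\Delta=0\right\}$ together with its identification $\hat{\nabla}\equiv\Gamma$ is insensitive to the choice of $N$. Since the facts that the horizontal part $\Gamma$ is an anisotropic connection and that $\Delta$ is the residue tensor are imported from \cite[\S 5 and 6]{JSV1}, the real work is to make the naturality explicit.

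For the natural map, I would start from the coordinate formula $\Delta^i_{jk}\,\partial_{x^i}=\hat{\nabla}_{\partial_{y^j}}\partial_{x^k}$ in \eqref{indentification linear 0} and observe that $N$ never enters it; thus $N$-independence is immediate once $\Delta$ is shown to be intrinsic. Tensoriality I would obtain by noting that the vertical coordinate fields transform as $\partial_{\tilde{y}^j}=\frac{\partial x^a}{\partial\tilde{x}^j}\,\partial_{y^a}$, with coefficients depending on $x$ alone, so that the naive vertical derivative $Z\mapsto\left(\partial_{y^j}Z^k\right)\partial_{x^k}$ already transforms as a $(1,1)$-tensor. Equivalently, $\pi^*_A(\TT M)$ carries a canonical flat connection in the fiber directions, obtained by differentiating a section fiberwise inside the fixed vector space $\TT_pM$; subtracting this flat vertical derivative from $\hat{\nabla}_{\partial_{y^j}}$ kills the derivative term and leaves exactly $\Delta$, a difference of connections along vertical directions, hence tensorial. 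Finally, since $\partial_{y^j}\in\mathrm{h}_{-1}\mathfrak{X}(A)$ and $\partial_{x^k}$ is $0$-homogeneous as a section, the homogeneity hypothesis on $\hat{\nabla}$ places $\Delta$ in $\mathrm{h}_{-1}\Ten^1_{2}(M_A)$.

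For part \ref{prop linear 1}, with $N$ fixed, I would use that $\left\{\delta_{x^j},\partial_{y^j}\right\}$ is a global frame of $\TT A$: by $\Fun(A)$-linearity in the direction slot, $\hat{\nabla}$ is completely determined by its values $\hat{\nabla}_{\delta_{x^j}}\partial_{x^k}$ and $\hat{\nabla}_{\partial_{y^j}}\partial_{x^k}$ on the frame $\left\{\partial_{x^k}\right\}$, i.e.\ by the pair $(\Gamma,\Delta)$, which gives injectivity with the reconstruction written out in \eqref{hat nabla}. Conversely, for arbitrary $(\Gamma,\Delta)\in\mathscr{C}_{\mathrm{anis}}(A)\times\mathrm{h}_{-1}\Ten^1_{2}(M_A)$, formula \eqref{hat nabla} defines a Koszul operator in each chart; it is $\Fun(A)$-linear in $\mathcal{X}$ and Leibnizian in $Z$ by inspection, it respects the homogeneity grading because $\Gamma\in\mathrm{h}_0$ and $\Delta\in\mathrm{h}_{-1}$, and it glues across charts because $\Gamma$ obeys the cocycle \eqref{transformation anisotropic} while $\Delta$ is tensorial (the $N$-dependent frame change being precisely what \eqref{relation gamma and hat} bookkeeps). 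This yields surjectivity, and the assertion that \eqref{second projection linear} becomes the second projection is then tautological.

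For part \ref{prop linear 2}, I would observe first that the condition $\Delta=0$ cuts out an $N$-independent subset of $\mathscr{C}_{\mathrm{lin}}(A)$, precisely because $\Delta$ was shown to be $N$-independent. On this subset, \eqref{relation gamma and hat} collapses to $\Gamma^i_{jk}=\left(\hat{\Gamma}^1\right)^i_{jk}$, and the right-hand side is the coordinate expression of $\hat{\nabla}_{\partial_{x^j}}\partial_{x^k}$, in which $N$ does not appear; hence the surviving datum $\Gamma$ is the same for every choice of auxiliary $N$. I expect the only genuinely delicate step to be the tensoriality and $N$-independence of $\Delta$ in the second paragraph; once the canonical flat vertical connection is isolated, everything else reduces to bookkeeping with the already-established cocycles and the frame $\left\{\delta_{x^j},\partial_{y^j}\right\}$.
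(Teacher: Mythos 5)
Your proposal is correct and follows essentially the same route as the paper: the paper's own proof merely points back to the preceding remarks — the frame $\left\{\delta_{x^i},\partial_{y^i}\right\}$, the identification \eqref{indentification linear 0}, the relation \eqref{relation gamma and hat} and the formula \eqref{hat nabla} — and cites \cite[Prop.~3 and Th.~3]{JSV1} for the details. What you supply explicitly (tensoriality and $N$-independence of $\Delta$ from the fact that the vertical frame change $\partial_{\wt{y}^j}=\frac{\partial x^a}{\partial\wt{x}^j}\,\partial_{y^a}$ has $x$-dependent coefficients, the chart-gluing argument for surjectivity, and the collapse of \eqref{relation gamma and hat} to $\Gamma^i_{jk}=\left(\hat{\Gamma}^1\right)^i_{jk}$ when $\Delta=0$) is precisely the content the paper outsources to that reference, so your write-up is a self-contained version of the same argument.
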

\begin{proof}
	Everything follows from the above remarks, but one can also see the proofs of \cite[Prop. 3 and Th. 3]{JSV1}.
\end{proof}
Now, we introduce operators that will play a role analogous to those of $\imath_{\Can}$ and $\vd$ in \eqref{ladder connections}, but to transition between $\mathscr{C}_{\mathrm{lin}}(A)$ and $\mathscr{C}_{\mathrm{anis}}(A)$.
\begin{corollary} \label{cor linear}
	Let us denote by $\overset{\mathrm{anis}}{\jmath_N}\colon\mathscr{C}_{\mathrm{lin}}(A)\rightarrow\mathscr{C}_{\mathrm{anis}}(A)$ the map $\hat{\nabla}\mapsto\Gamma$ in \eqref{identification linear}, and by $\underset{\mathrm{anis}}{\varrho}\colon\mathscr{C}_{\mathrm{anis}}(A)\rightarrow \mathscr{C}_{\mathrm{lin}}(A)$ the one that is well-defined by Prop. \ref{prop linear} \ref{prop linear 2}. Then, 
	\begin{equation}
		\overset{\mathrm{anis}}{\jmath_N}\circ\underset{\mathrm{anis}}{\varrho}=\mathrm{Id}_{\mathscr{C}_{\mathrm{anis}}(A)},
		\label{fake euler}
	\end{equation}
	the operator $\underset{\mathrm{anis}}{\varrho}$ is injective, $	\overset{\mathrm{anis}}{\jmath_N}$ is surjective and the 
	\begin{equation}
		\begin{matrix}
			\mathscr{C}_{\mathrm{lin}}(A)&\equiv&\Img(\underset{\mathrm{anis}}{\varrho})\times\mathrm{h}_{-1}\Ten^1_{2}(M_A), \\
			\hat{\nabla}&\equiv&(\underset{\mathrm{anis}}{\varrho}(\overset{\mathrm{anis}}{\jmath_N}(\hat{\nabla})),\hat{\nabla}-\underset{\mathrm{anis}}{\varrho}(\overset{\mathrm{anis}}{\jmath_N}(\hat{\nabla}))).
		\end{matrix}
	\label{decomposition linear}
	\end{equation}
	\begin{proof}
		The identity \eqref{fake euler} and the injectivity and surjectivity are obvious from Prop. \ref{prop linear} and how $\overset{\mathrm{anis}}{\jmath_N}$ and $\underset{\mathrm{anis}}{\varrho}$ are defined. For \eqref{decomposition linear}, we are just taking \eqref{identification linear}, identifying $\mathscr{C}_{\mathrm{anis}}(A)$ with its image and realizing that one obtains $\Delta$ as follows:
		\[
		\hat{\nabla}\equiv(\Gamma,\Delta)\longmapsto\overset{\mathrm{anis}}{\jmath_N}(\hat{\nabla})=\Gamma\longmapsto\underset{\mathrm{anis}}{\varrho}(\overset{\mathrm{anis}}{\jmath_N}(\hat{\nabla}))\equiv(\Gamma,0),
		\]
		\[
		\hat{\nabla}-\underset{\mathrm{anis}}{\varrho}(\overset{\mathrm{anis}}{\jmath_N}(\hat{\nabla}))\equiv(\Gamma,\Delta)-(\Gamma,0)=\Delta.
		\]
	\end{proof}
\end{corollary}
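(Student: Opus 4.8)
The plan is to reduce everything to the $N$-dependent identification established in Prop.~\ref{prop linear}~\ref{prop linear 1}, under which the two operators appearing in the statement acquire a completely transparent form. Recall that fixing $N$ yields $\mathscr{C}_{\mathrm{lin}}(A)\equiv\mathscr{C}_{\mathrm{anis}}(A)\times\mathrm{h}_{-1}\Ten^1_2(M_A)$ via $\hat{\nabla}\equiv(\Gamma,\Delta)$, with $\Gamma$ the horizontal part and $\Delta$ the residue, as in \eqref{indentification linear 0}. First I would observe that, by the very definitions recalled in the statement, $\overset{\mathrm{anis}}{\jmath_N}$ is the first projection $(\Gamma,\Delta)\mapsto\Gamma$, while, by Prop.~\ref{prop linear}~\ref{prop linear 2} together with \eqref{vertically trivial}, $\underset{\mathrm{anis}}{\varrho}$ is the zero-residue section $\Gamma\mapsto(\Gamma,0)$.

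With these two identifications in hand, \eqref{fake euler} is immediate: $\overset{\mathrm{anis}}{\jmath_N}(\underset{\mathrm{anis}}{\varrho}(\Gamma))=\overset{\mathrm{anis}}{\jmath_N}((\Gamma,0))=\Gamma$ for every $\Gamma\in\mathscr{C}_{\mathrm{anis}}(A)$. From this one-sided inverse the remaining first-order claims follow formally, exactly as in the proof of Cor.~\ref{corollary connections}: $\underset{\mathrm{anis}}{\varrho}$ admits a left inverse, hence is injective, and $\overset{\mathrm{anis}}{\jmath_N}$ admits a right inverse, hence is surjective.

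For the decomposition \eqref{decomposition linear}, I would compute both factors explicitly under the identification. On one hand, $\underset{\mathrm{anis}}{\varrho}(\overset{\mathrm{anis}}{\jmath_N}(\hat{\nabla}))\equiv(\Gamma,0)$, so its image is precisely $\Img(\underset{\mathrm{anis}}{\varrho})\equiv\mathscr{C}_{\mathrm{anis}}(A)\times\{0\}$. On the other hand, the difference $\hat{\nabla}-\underset{\mathrm{anis}}{\varrho}(\overset{\mathrm{anis}}{\jmath_N}(\hat{\nabla}))\equiv(\Gamma,\Delta)-(\Gamma,0)$ recovers $\Delta\in\mathrm{h}_{-1}\Ten^1_2(M_A)$, yielding the announced splitting $\mathscr{C}_{\mathrm{lin}}(A)\equiv\Img(\underset{\mathrm{anis}}{\varrho})\times\mathrm{h}_{-1}\Ten^1_2(M_A)$.

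The only point requiring genuine care is the affine-versus-vector bookkeeping hidden in that last difference. Since $\mathscr{C}_{\mathrm{lin}}(A)$ is an affine space and $\hat{\nabla}$ and $\underset{\mathrm{anis}}{\varrho}(\overset{\mathrm{anis}}{\jmath_N}(\hat{\nabla}))$ share the same horizontal part $\Gamma$, their difference is a genuine anisotropic $(1,2)$-tensor; evaluating \eqref{hat nabla} on the two connections shows that this tensor acts only through the $\partial_{y^j}$-directions and is represented exactly by $\Delta$, confirming that it lands in the vector space $\mathrm{h}_{-1}\Ten^1_2(M_A)$ directing the affine factor. Beyond this bookkeeping I expect no obstacle: all the substance sits in Prop.~\ref{prop linear}, and this corollary merely repackages that identification in the language of one-sided inverses and a direct-sum-type decomposition, fully parallel to Cor.~\ref{corollary connections}.
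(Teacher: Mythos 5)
Your proof is correct and follows essentially the same route as the paper's: both reduce everything to the identification $\hat{\nabla}\equiv(\Gamma,\Delta)$ of Prop.~\ref{prop linear}~\ref{prop linear 1}, read $\overset{\mathrm{anis}}{\jmath_N}$ and $\underset{\mathrm{anis}}{\varrho}$ as the first projection and the zero-residue section, and obtain \eqref{fake euler}, injectivity, surjectivity and the decomposition \eqref{decomposition linear} by the same computation $(\Gamma,\Delta)-(\Gamma,0)=\Delta$. Your closing remark on the affine-versus-vector bookkeeping via \eqref{hat nabla} is a sound extra check, consistent with the paper's framework, though the paper treats it as implicit.
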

Due to this result, one can prolong \eqref{ladder connections} to the left:
\begin{equation}
	\xymatrix{ 
		\mathscr{C}_{\mathrm{lin}}(A) \ar@/^/[r]^{\overset{\mathrm{anis}}{\jmath_N}} &\mathscr{C}_{\mathrm{anis}}(A)\ar@/^/[l]^{\underset{\mathrm{anis}}{\varrho}}\ar@/^/[r]^{\overset{\mathrm{nl}}{\imath_{\Can}}} &  \mathscr{C}_{\mathrm{nl}}(A)\ar@/^/[l]^{\underset{\mathrm{nl}}{\vd}}\ar@/^/[r]^{\overset{\mathrm{spr}}{\imath_{\Can}}} & \mathscr{C}_{\mathrm{spr}}(A).\ar@/^/[l]^{\underset{\mathrm{spr}}{\vd}}}
	\label{ladder connections linear}
\end{equation}
As announced, this is formally consistent with Cor. \ref{corollary connections} despite the very different natures of $\overset{\mathrm{anis}}{\jmath_N}$ and $\underset{\mathrm{anis}}{\varrho}$ from those of the $\imath_{\Can}$'s and $\vd$'s, resp.

\subsection{Intrinsically}

We now aim to complete the landscape of correspondences among the connection-type objects. In order to do so, let us see that, to some extent, one can replace the map $\overset{\mathrm{anis}}{\jmath_N}$ by another one that does not depend on any auxiliary nonlinear connection. 

The strategy will be to make $\hat{\nabla}\in\mathscr{C}_{\mathrm{lin}}(A)$ produce itself a natural nonlinear connection, and then evaluate $\overset{\mathrm{anis}}{\jmath_N}$ with respect to it. For this, we turn to the regularity conditions of \cite{Min23}. One says that $\hat{\nabla}$ is \emph{regular} if the restriction $\hat{\nabla}\Can\colon\VV A\rightarrow\pi^*_A(\TT M)$ is an isomorphism of bundles over $A$, and that it is \emph{strongly regular} if $\left.\hat{\nabla}\Can\right|_{\VV A}$ is the identity when considered with codomain $\VV A$ through \eqref{vertical isomorphism}. In either case, putting $\HH_v A:=\Ker(\left.\hat{\nabla}\Can\right|_{\TT_v A})$ for $v\in A$ defines a horizontal bundle, and therefore must correspond to a unique nonlinear connection $N\in\mathscr{C}_{\mathrm{nl}}(A)$.

We shall write $\mathscr{C}_{\mathrm{lin}}^{\mathrm{reg}}(A)$ for the set of regular linear connections. They had been called \emph{good connections} in \cite[Def. 1.2.2]{AP}, below which was the explicit computation of the nonlinear connection induced by $\hat{\nabla}$. We reproduce it for completeness, starting with the following expression from \eqref{hat nabla}:
\[
\hat{\nabla}_{\mathcal{X}}\Can=\left\{\delta^i_j\overset{2}{\mathcal{X}^j}+\left(\hat{\Gamma}^2\right)^i_{jc}y^c\overset{2}{\mathcal{X}^j}+\left(\hat{\Gamma}^1\right)^i_{jc}y^c\overset{1}{\mathcal{X}^j}\right\}\partial_{x^i}.
\]
 With this, one easily sees that
\begin{equation}
	\mathscr{C}_{\mathrm{lin}}^{\mathrm{reg}}(A)=\left\{\hat{\nabla}\in\mathscr{C}_{\mathrm{lin}}(A)\colon\left(\delta^i_j+\left(\hat{\Gamma}^2\right)^i_{jc}y^c\right)_{n\times n}\text{ invertible everywhere}\right\}.
	\label{regular connections}
\end{equation}
For $\hat{\nabla}\in\mathscr{C}_{\mathrm{lin}}^{\mathrm{reg}}(A)$, denote 
\begin{equation}
	\left(B^i_j\right):=\left(\delta^i_j+\left(\hat{\Gamma}^2\right)^i_{jc}y^c\right)^{-1},
	\label{definition B}
\end{equation}
so that
\begin{equation}
	\Ker(\hat{\nabla}\Can)\equiv\left\{\overset{2}{\mathcal{X}^i}+B^i_a\left(\hat{\Gamma}^1\right)^a_{jc}y^c\overset{1}{\mathcal{X}^j}=0,\quad i\in\left\{1,\ldots,n\right\}\right\}.
	\label{description horizontal}
\end{equation}
It is well-known \cite[(13)]{JSV1} that for $v\in A$,
\[
\HH_vA=\mathrm{Span}\left\{\left.\partial_{x^i}\right|_v-N_i^a(v)\left.\partial_{y^a}\right|_v\right\},
\]
and by choosing $\overset{1}{\mathcal{X}^j}=\delta^j_i$ for each $i$ in \eqref{description horizontal}, it follows that 
\begin{equation}
	N_i^a(v)=B^a_b(v)\left(\hat{\Gamma}^1\right)^b_{ic}(v)\,y^c(v).
	\label{nonlinear from linear}
\end{equation}
(Strongly regular case: $B^i_j=\delta^i_j$ and $N^i_j=\left(\hat{\Gamma}^1\right)^i_{jc}y^c$.)

The next theorem rounds up the correspondences between connections by using \eqref{nonlinear from linear} to produce an anisotropic one.
\begin{theorem} \label{cor linear intrinsic}
 There is a well-defined map
	\[
	\overset{\mathrm{anis}}{\jmath}\colon\mathscr{C}_{\mathrm{lin}}^{\mathrm{reg}}(A)\longrightarrow\mathscr{C}_{\mathrm{anis}}(A)
	\]
	by taking $\hat{\nabla}$ to (in the notation of Cor. \ref{cor linear}) $\overset{\mathrm{anis}}{\jmath_N}(\hat{\nabla})$ for the $N$ of \eqref{nonlinear from linear}. (I.e., if $\hat{\nabla}$ is given by \eqref{linear connection coords}, then $\overset{\mathrm{anis}}{\jmath}(\hat{\nabla})=\Gamma$ with 
	\[
	\Gamma^i_{jk}=\left(\hat{\Gamma}^1\right)^i_{jk}-B^b_a\left(\hat{\Gamma}^1\right)^a_{jc}y^c\left(\hat{\Gamma}^2\right)^i_{bk},
	\]
	where $\left(B^i_j\right)$ is the inverse matrix of $\left(\delta^i_j+\left(\hat{\Gamma}^2\right)^i_{jc}y^c\right)$.) Moreover:
	\begin{enumerate} [label=\it{\roman*)}]
		\item It holds that 
		\[
		\Img(\underset{\mathrm{anis}}{\varrho})\subset\mathscr{C}_{\mathrm{lin}}^{\mathrm{reg}}(A),\qquad\overset{\mathrm{anis}}{\jmath}\circ\underset{\mathrm{anis}}{\varrho}=\mathrm{Id}_{\mathscr{C}_{\mathrm{anis}}(A)},
		\]
		so $\overset{\mathrm{anis}}{\jmath}$ is surjective.
		
		\item \label{cor linear intrinsic 2} There is a bijective correspondence
		\[
		\begin{matrix}
			\mathscr{C}_{\mathrm{lin}}^{\mathrm{reg}}(A)&\equiv&\mathscr{C}_{\mathrm{anis}}(A)\times\left\{\Delta\in\mathrm{h}_{-1}\Ten^1_2(M_A)\colon\left(\delta^i_j+\Delta^i_{jc}y^c\right)\mathrm{\; invertible \; everywhere}\right\},\\
			\hat{\nabla}&\equiv&(\Gamma,\Delta).
		\end{matrix}
		\]
		To be precise, from left to right, $\Gamma=\overset{\mathrm{anis}}{\jmath}(\hat{\nabla})$ and $\Delta$ is given by \eqref{second projection linear}, whereas from right to left, $\hat{\nabla}$ is obtained from $(\Gamma,\Delta)$ by applying Prop. \ref{prop linear} \ref{prop linear 1} for $N=\overset{\mathrm{nl}}{\imath_{\Can}}(\Gamma)$.
	\end{enumerate}
\end{theorem}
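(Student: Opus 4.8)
The plan is to realize $\overset{\mathrm{anis}}{\jmath}$ as the composition of two operations that are already under control and then to reduce the bijection in \ref{cor linear intrinsic 2} to a single algebraic identity. For well-definedness, note that for $\hat{\nabla}\in\mathscr{C}_{\mathrm{lin}}^{\mathrm{reg}}(A)$ the regularity criterion \eqref{regular connections} produces, via \eqref{nonlinear from linear}, a genuine nonlinear connection $N$; then $\overset{\mathrm{anis}}{\jmath_N}(\hat{\nabla})$ is a bona fide anisotropic connection by Cor. \ref{cor linear} (which rests on Prop. \ref{prop linear}). Since both the assignment $\hat{\nabla}\mapsto N$ and the operator $\overset{\mathrm{anis}}{\jmath_N}$ are intrinsic, so is $\overset{\mathrm{anis}}{\jmath}$, and the displayed formula for $\Gamma^i_{jk}$ follows at once by substituting \eqref{nonlinear from linear} into \eqref{relation gamma and hat}, recalling $\Delta^i_{bk}=(\hat{\Gamma}^2)^i_{bk}$.

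The crux of the whole theorem is the identity
\[
\overset{\mathrm{nl}}{\imath_{\Can}}\bigl(\overset{\mathrm{anis}}{\jmath}(\hat{\nabla})\bigr)=N,
\]
i.e.\ that contracting the intrinsic anisotropic connection with $\Can$ returns exactly the nonlinear connection induced by regularity. I would prove it by writing $M^i_j:=\delta^i_j+\Delta^i_{jc}y^c$, so that $B=(M^i_j)^{-1}$ in \eqref{definition B}, and rephrasing \eqref{nonlinear from linear} as $(\hat{\Gamma}^1)^i_{jc}y^c=M^i_b N^b_j$. Contracting the explicit formula for $\Gamma^i_{jk}$ with $y^k$ then gives $\Gamma^i_{jc}y^c=M^i_b N^b_j-N^b_j\Delta^i_{bc}y^c=N^i_j$, the two $\Delta$-terms cancelling. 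This short computation is the main obstacle; everything else is formal once it is in place, but it is precisely what makes the two independent constructions of a nonlinear connection agree.

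For the first item, observe that $\underset{\mathrm{anis}}{\varrho}(\Gamma)$ is vertically trivial, i.e.\ $\Delta=0$, whence $M^i_j=\delta^i_j$ is invertible and $\underset{\mathrm{anis}}{\varrho}(\Gamma)\in\mathscr{C}_{\mathrm{lin}}^{\mathrm{reg}}(A)$ by \eqref{regular connections}. With $\Delta=0$ one has $B=\mathrm{Id}$, so the explicit formula collapses to $\Gamma^i_{jk}=(\hat{\Gamma}^1)^i_{jk}$, which by \eqref{relation gamma and hat} (again with $\Delta=0$) equals the input $\Gamma$; hence $\overset{\mathrm{anis}}{\jmath}\circ\underset{\mathrm{anis}}{\varrho}=\mathrm{Id}_{\mathscr{C}_{\mathrm{anis}}(A)}$, and $\overset{\mathrm{anis}}{\jmath}$ is surjective because it admits a right inverse.

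For \ref{cor linear intrinsic 2}, I would check that the two maps named in the statement are mutually inverse. The map $\hat{\nabla}\mapsto(\overset{\mathrm{anis}}{\jmath}(\hat{\nabla}),\Delta)$ lands in the stated product exactly by \eqref{regular connections}, since $\Delta^i_{jc}y^c=(\hat{\Gamma}^2)^i_{jc}y^c$ makes $M$ invertible iff $\hat{\nabla}$ is regular; conversely, the map $(\Gamma,\Delta)\mapsto\hat{\nabla}$ built via Prop. \ref{prop linear} \ref{prop linear 1} with $N=\overset{\mathrm{nl}}{\imath_{\Can}}(\Gamma)$ has vertical part $\Delta$, hence is regular for the same reason. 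Both round trips close up by the crux identity: composing right-to-left then left-to-right, the auxiliary $N=\overset{\mathrm{nl}}{\imath_{\Can}}(\Gamma)$ used to build $\hat{\nabla}$ is the very nonlinear connection that $\overset{\mathrm{anis}}{\jmath}$ subsequently reads off, so $\Gamma$ is recovered; composing the other way, the identity gives $\overset{\mathrm{nl}}{\imath_{\Can}}(\overset{\mathrm{anis}}{\jmath}(\hat{\nabla}))=N$, the very $N$ with respect to which $\hat{\nabla}\equiv(\Gamma,\Delta)$, so Prop. \ref{prop linear} \ref{prop linear 1} returns $\hat{\nabla}$ unchanged. This establishes the claimed bijection.
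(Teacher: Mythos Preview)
Your proof is correct and follows essentially the same route as the paper's. Both arguments hinge on the single computation showing that $\overset{\mathrm{nl}}{\imath_{\Can}}\bigl(\overset{\mathrm{anis}}{\jmath}(\hat{\nabla})\bigr)$ equals the nonlinear connection \eqref{nonlinear from linear}; the paper carries this out as equation \eqref{two nonlinear connections coincide} inside the verification of item \ref{cor linear intrinsic 2}, whereas you isolate it upfront as the ``crux identity'' (and your notation $M^i_j$ for $\delta^i_j+\Delta^i_{jc}y^c$ makes the cancellation slightly cleaner), but the logical structure and the use of that identity to close both round trips are the same.
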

\begin{proof}
	The first assertions follow from the above remarks, including \eqref{relation gamma and hat} and \eqref{nonlinear from linear} for the relation between the components of $\hat{\nabla}$ and $\overset{\mathrm{anis}}{\jmath}(\hat{\nabla})$.
	\begin{enumerate} [label=\it{\roman*)}]
		\item From the definition of $\underset{\mathrm{anis}}{\varrho}$ (see Cor. \ref{cor linear}), we know that its image is the set of vertically trivial linear connections, i.e., the left hand side of \eqref{vertically trivial}, clearly contained in \eqref{regular connections}. What is more, consider $\Gamma\in	\mathscr{C}_{\mathrm{anis}}(A)$,  $	\underset{\mathrm{anis}}{\varrho}\Gamma\in\mathscr{C}_{\mathrm{lin}}^{\mathrm{reg}}(A)$ and the $N\in\mathscr{C}_{\mathrm{nl}}(A)$ associated with $\underset{\mathrm{anis}}{\varrho}\Gamma$ via \eqref{nonlinear from linear}. In the presence of $N$ (or any other nonlinear connection, for that matter), Prop. \ref{prop linear} \ref{prop linear 1} gives us $\underset{\mathrm{anis}}{\varrho}\Gamma\equiv(\Gamma,0)$, and so
		\[
		\overset{\mathrm{anis}}{\jmath}(\underset{\mathrm{anis}}{\varrho}\Gamma)=\Gamma.
		\]
		
		\item Let us see that the two described maps compose to the identity.
		
		First, we obtain $\Gamma$ and $\Delta$ from $\hat{\nabla}$. This requires of the nonlinear connection of components $B^i_a\left(\hat{\Gamma}^1\right)^a_{jc}y^c$: the relation \eqref{relation gamma and hat} in this case tells us that 
		\[
		\Gamma^i_{jk}=\left(\hat{\Gamma}^1\right)^i_{jk}-B^b_a\left(\hat{\Gamma}^1\right)^a_{jc}y^c\Delta_{ bk}^i.
		\]
		Now we compute $N=\overset{\mathrm{nl}}{\imath_{\Can}}(\Gamma)$:
		\begin{equation}
			\begin{split}
				N^i_j=\Gamma^i_{jc}y^c=\left(\hat{\Gamma}^1\right)^i_{jc}y^c-B^b_a\left(\hat{\Gamma}^1\right)^a_{jc}y^c\Delta_{ bd}^i y^d&=\left(\hat{\Gamma}^1\right)^a_{jc}y^c\left(\delta^i_a-B^b_a\left(\hat{\Gamma}^2\right)_{bd}^i y^d\right) \\
				&=B^i_a\left(\hat{\Gamma}^1\right)^a_{jc}y^c.
			\end{split}
		\label{two nonlinear connections coincide}
		\end{equation}
		by using  $\Delta^i_{ jk}=\left(\hat{\Gamma}^2\right)_{jk}^i$ and \eqref{definition B}. So, $N$ is the nonlinear connection with which we started. But it is according to this one that $\hat{\nabla}\equiv(\Gamma,\Delta)$, so when we apply the right-to-left map of Th. \ref{cor linear intrinsic} \ref{cor linear intrinsic 2}, we recover $\hat{\nabla}$.
		
		Second, we obtain $\hat{\nabla}$ from $(\Gamma,\Delta)$, by means of $N=\overset{\mathrm{nl}}{\imath_{\Can}}(\Gamma)$. The components of $\hat{\nabla}$ are determined by \eqref{indentification linear 0} and \eqref{relation gamma and hat}, and the fact that $\left(\delta^i_j+\left(\hat{\Gamma}^2\right)^i_{jc}y^c\right)=\left(\delta^i_j+\Delta^i_{ jc}y^c\right)$ guarantees that we can compute $B^i_a\left(\hat{\Gamma}^1\right)^a_{jc}y^c$. When doing so, one recovers $N^i_j$, as happened in \eqref{two nonlinear connections coincide}. From here, the same argument as above shows that when applying the left-to-right map of Th. \ref{cor linear intrinsic} \ref{cor linear intrinsic 2} to $\hat{\nabla}$, we recover $\Gamma=\overset{\mathrm{anis}}{\jmath}(\hat{\nabla})$, while $\Delta$ is by construction the vertical part \eqref{second projection linear} of $\hat{\nabla}$.
	\end{enumerate}
\end{proof}
This result provides another consistent prolongation of \eqref{ladder connections}:
\begin{equation}
	\xymatrix{ 
		\mathscr{C}_{\mathrm{lin}}^{\mathrm{reg}}(A) \ar@/^/[r]^{\overset{\mathrm{anis}}{\jmath}} &\mathscr{C}_{\mathrm{anis}}(A)\ar@/^/[l]^{\underset{\mathrm{anis}}{\varrho}}\ar@/^/[r]^{\overset{\mathrm{nl}}{\imath_{\Can}}} &  \mathscr{C}_{\mathrm{nl}}(A)\ar@/^/[l]^{\underset{\mathrm{nl}}{\vd}}\ar@/^/[r]^{\overset{\mathrm{spr}}{\imath_{\Can}}} & \mathscr{C}_{\mathrm{spr}}(A).\ar@/^/[l]^{\underset{\mathrm{spr}}{\vd}}}
	\label{ladder connections 2}
\end{equation}

Our previous work \cite[\S 6.2]{JSV1} contains an account of the most classical linear connections attached to a semi-Finsler Lagrangian $L$: the Berwald, Hashiguchi, Chern-Rund and Cartan connections. It is important to keep in mind that all of them are strongly regular. (One can see this by recalling that for the Berwald and Chern ones, $\Delta=0$, while for the Hashiguchi  and Cartan ones, $\Delta$ is the Cartan tensor; in all cases, $\Delta^i_{ ja}y^a=0$, which is the strong regularity condition.) One may check that the nonlinear connection \eqref{nonlinear from linear} produced in all four cases is the \emph{canonical nonlinear connection}
\[
\mathring{N}:=\underset{\mathrm{spr}}{\vd}\mathring{G},\qquad \mathring{G}^i:=\frac{1}{4}g^{ic}\left(\frac{\partial g_{cb}}{\partial x^a}+\frac{\partial g_{ac}}{\partial x^b}-\frac{\partial g_{ab}}{\partial x^c}\right)y^ay^b.
\]
So, this is the connection that will be used to decompose the four $\hat{\nabla}$'s as $(\Gamma,\Delta)$ when computing $\overset{\mathrm{anis}}{\jmath}$. Going back to \cite[\S 6.2]{JSV1}, we see that $\overset{\mathrm{anis}}{\jmath}(\hat{\nabla})$ is the Berwald anisotropic connection when $\hat{\nabla}$ is Berwald's or Hashiguchi's, and it is the Chern anisotropic connction when $\hat{\nabla}$ is Chern's or Cartan's. These were known results, but here we have derived them in a language compatible with the rest of correspondences between connection-type objects in \S \ref{section connections}. We are also refining the viewpoint of \cite{JSV1}, since here the auxiliary nonlinear connection is not assumed from the beginning, but rather is derived from $\hat{\nabla}$.

\section{Consequences for variational problems} \label{variational problems}

To end this article, we turn our attention to anisotropic extensions of general relativity, including the Lorentz-Finsler ones, as mentioned in \S \ref{section metrics 1}. Concretely, we focus on the extensions that admit a variational formulation and their comparison. As discussed in \S \ref{introduction}, the examples PWHV \cite{PW,HPV}, JSV \cite{JSV2} and GM \cite{GPMin} are formulated for different metric-type and connection-type objects. The main point here is that if any kind of comparison is to be done between these theories, first one should attempt to put them on a common ground. We shall see that on the metric part there are some obstructions to this, but on the affine ladder \eqref{ladder connections 2} (or \eqref{ladder connections linear}) it is completely feasible.

Recall the sets $\mathscr{M}_{\mathrm{s-F}}$ (semi-Finsler Lagrangians), $\mathscr{M}_{\mathrm{Lt}}$ (Legendre transformations), $\mathscr{M}_{\mathrm{anis}}$ (anisotropic metrics), $\mathscr{C}_{\mathrm{spr}}$ (sprays), $\mathscr{C}_{\mathrm{nl}}$ (nonlinear connections), $\mathscr{C}_{\mathrm{anis}}$ (anisotropic connections) and $\mathscr{C}_{\mathrm{lin}}^{\mathrm{reg}}$ (regular linear connections). The theories of our interest are defined on either one of these or a product of two of them. (For instance, $\mathscr{M}_{\mathrm{s-F}}$ for \cite{PW,HPV} $\mathscr{M}_{\mathrm{s-F}}\times\mathscr{C}_{\mathrm{nl}}$ for \cite{JSV2} and $\mathscr{M}_{\mathrm{anis}}\times\mathscr{C}_{\mathrm{lin}}^{\mathrm{reg}}$, though via frames, for \cite{GPMin}.) However, we will only care about the dependence on each variable separately, thinking of the rest as fixed if needed. Thus:
\begin{itemize}
	\item We will refer as an \emph{(action) functional} to any map $\mathscr{S}\colon\mathscr{X}\rightarrow\mathbb{R}$ in which $\mathscr{X}$ is one of the above seven sets. 
	
	\item This $\mathscr{X}$ will also determine the class of \emph{variations} required to make sense of the critical point problem, when $\mathscr{S}$ is appropriately differentiable.
	
	\item We will obtain results in which new functionals are produced from $\mathscr{S}$ on different levels of \eqref{ladder connections 2}, or on $\mathscr{M}_{\mathrm{s-F}}$, $\mathscr{M}_{\mathrm{Lt}}$ or $\mathscr{M}_{\mathrm{anis}}$. In practise, $\mathscr{S}$ will be an integral of some Lagrangian density $\Lambda$. Then, it is important to keep in mind that new Lagrangian densities are being produced from $\Lambda$ on the aforementioned domains.
\end{itemize}

Let $\mathscr{S}$ be a functional defined on $\mathscr{M}_{\mathrm{anis}}$. There is a canonical way of making sense of its restriction to semi-Finsler Lagrangians:
\begin{equation}
	\left.\mathscr{S}\right|_{\mathscr{M}_{\mathrm{s-F}}}\colon\mathscr{M}_{\mathrm{s-F}}\longrightarrow\mathbb{R},\qquad\left.\mathscr{S}\right|_{\mathscr{M}_{\mathrm{s-F}}}[L]:=\mathscr{S}[\frac{1}{2}\,\underset{1}{\vd}\underset{2}{\vd}L].
	\label{restriction to pseudo-Finsler}
\end{equation}
On the other hand, when attempting to the define the restriction of $\mathscr{S}$ to Legendre transformations $\ell\in\mathscr{M}_{\mathrm{Lt}} $, one runs into the problem described in \S \ref{symmetry}. Namely, the symmetry of $\vd\ell$ has to be imposed, which is equivalent to $\ell=\vd L$ for a Lagrangian $L$, so one is back to \eqref{restriction to pseudo-Finsler}. If, instead of this, the functional was originally defined for Legendre transformations, one could still make sense of its restriction to semi-Finsler Lagrangians: $\left.\mathscr{S}\right|_{\mathscr{M}_{\mathrm{s-F}} }[L]:=\mathscr{S}[\underset{2}{\vd}L]$. 

The opposite process, extension of functionals for metric-type objects, fails in principle. Indeed, say that now we are given $\mathscr{S}_0\colon\mathscr{M}_{\mathrm{s-F}} \rightarrow\mathbb{R}$ and we want to evaluate it at an arbitrary anisotropic metric $g$. It does not make sense to write $\mathscr{S}_0[\overset{2}{\imath_{\Can}}\overset{1}{\imath_{\Can}}\,g]$, for $\overset{2}{\imath_{\Can}}\overset{1}{\imath_{\Can}}\,g\left(:=g(\Can,\Can)\right)$ may not be a semi-Finsler Lagrangian due to the regularity issues in \S \ref{regularity}. (Same if we want to extend $\mathscr{S}_0$ to $\mathscr{M}_{\mathrm{Lt}} $, or from $\mathscr{M}_{\mathrm{Lt}} $ to $\mathscr{M}_{\mathrm{anis}} $.)

The good news is that this symmetry and regularity problems when transitioning between metric-type objects are absent for connection-type ones. So, one will be able to restrict and extend functionals between any two levels of the ladders \eqref{ladder connections} and \eqref{ladder connections 2}.
\begin{theorem} \label{functionals}
	Let $\mathscr{S}\colon\mathscr{C}_{\mathrm{lin}}^{\mathrm{reg}} \rightarrow\mathbb{R}$ and $\mathscr{S}_0\colon\mathscr{C}_{\mathrm{anis}} \rightarrow\mathbb{R}$ be functionals. There is a natural way to define the restriction of $\mathscr{S}$ to $\mathscr{C}_{\mathrm{anis}} $, namely
	\[
	\left.\mathscr{S}\right|_{\mathscr{C}_{\mathrm{anis}} }\colon\mathscr{C}_{\mathrm{anis}} \longrightarrow\mathbb{R},\qquad\left.\mathscr{S}\right|_{\mathscr{C}_{\mathrm{anis}} }[\Gamma]:=\mathscr{S}[\underset{\mathrm{anis}}{\varrho}\Gamma],
	\]
	and one to extend $\mathscr{S}_0$ to $\mathscr{C}_{\mathrm{lin}}^{\mathrm{reg}} $, namely
	\[
	\mathscr{S}_1\colon\mathscr{C}_{\mathrm{lin}}^{\mathrm{reg}} \longrightarrow\mathbb{R},\qquad\mathscr{S}_1[\hat{\nabla}]:=\mathscr{S}_0[\overset{\mathrm{anis}}{\jmath}(\hat{\nabla})].
	\]
	Furthermore: 
	\begin{enumerate} [label=\it{\roman*)}]
		\item \label{functionals 1} There is a natural modification of $\mathscr{S}$ defined by 
		\[
		\wt{\mathscr{S}}\colon\mathscr{C}_{\mathrm{lin}}^{\mathrm{reg}} \longrightarrow\mathbb{R},\qquad\wt{\mathscr{S}}[\hat{\nabla}]:=\left.\mathscr{S}\right|_{\mathscr{C}_{\mathrm{anis}} }[\overset{\mathrm{anis}}{\jmath}(\hat{\nabla})]=\mathscr{S}[\underset{\mathrm{anis}}{\varrho}(\overset{\mathrm{anis}}{\jmath}(\hat{\nabla}))].
		\]
		Under the identification in Th. \ref{cor linear intrinsic} \ref{cor linear intrinsic 2}, this becomes a functional on $\mathscr{C}_{\mathrm{anis}} \times\left\{\Delta\in\mathrm{h}_{-1}\Ten^1_2 \colon\left(\delta^i_j+\Delta^i_{jc}y^c\right)\mathrm{\; invertible \; everywhere}\right\}$ invariant to the maps $(\Gamma,\Delta)\mapsto(\Gamma,\Delta+\Delta^\prime)$ for every admissible $\Delta^\prime$.
		
		\item \label{functionals 2} If an auxiliary nonlinear connection $N$ is given, one can extend $\mathscr{S}_0$ to $\mathscr{C}_{\mathrm{lin}} $ in a different way:
		\[
		\mathscr{S}_{2,N}\colon\mathscr{C}_{\mathrm{lin}} \longrightarrow\mathbb{R},\qquad\mathscr{S}_{2,N}[\hat{\nabla}]:=\mathscr{S}_0[\overset{\mathrm{anis}}{\jmath_N}(\hat{\nabla})].
		\]
		Accordingly, the alternative modification of $\mathscr{S}$
		\[
		\wt{\mathscr{S}}_{2,N}\colon\mathscr{C}_{\mathrm{lin}} \longrightarrow\mathbb{R},\qquad\wt{\mathscr{S}}_{2,N}[\hat{\nabla}]=\mathscr{S}[\underset{\mathrm{anis}}{\varrho}(\overset{\mathrm{anis}}{\jmath_N}(\hat{\nabla}))], 
		\]
		becomes under the identification in Prop. \ref{prop linear} \ref{prop linear 1} a functional on $\mathscr{C}_{\mathrm{anis}} \times\mathrm{h}_{-1}\Ten^1_{2} $ invariant to $(\Gamma,\Delta)\mapsto(\Gamma,\Delta+\Delta^\prime)$ for every $\Delta^\prime\in \mathrm{h}_{-1}\Ten^1_{2} $.
	\end{enumerate}
\end{theorem}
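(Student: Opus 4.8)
The plan is to separate the \emph{well-definedness} of the four displayed maps from the \emph{invariance} assertions, and to observe that, once the correspondences of Cor.~\ref{cor linear}, Th.~\ref{cor linear intrinsic} and Prop.~\ref{prop linear} are in hand, every claim reduces to the remark that the relevant functional factors through the projection onto the anisotropic-connection component of a product. No new analysis is required: the content is entirely in reading off which coordinate of each decomposition a given functional actually sees.

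First I would check that the four maps make sense. The restriction $\left.\mathscr{S}\right|_{\mathscr{C}_{\mathrm{anis}}}[\Gamma]=\mathscr{S}[\underset{\mathrm{anis}}{\varrho}\Gamma]$ is legitimate because $\underset{\mathrm{anis}}{\varrho}$ lands in $\mathscr{C}_{\mathrm{lin}}^{\mathrm{reg}}$ (Th.~\ref{cor linear intrinsic}, first item), so $\mathscr{S}$ can be evaluated on it; and $\mathscr{S}_1[\hat{\nabla}]=\mathscr{S}_0[\overset{\mathrm{anis}}{\jmath}(\hat{\nabla})]$ is legitimate because $\overset{\mathrm{anis}}{\jmath}\colon\mathscr{C}_{\mathrm{lin}}^{\mathrm{reg}}\to\mathscr{C}_{\mathrm{anis}}$ is well defined by that same theorem. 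Likewise $\mathscr{S}_{2,N}$ is defined on all of $\mathscr{C}_{\mathrm{lin}}$ since $\overset{\mathrm{anis}}{\jmath_N}$, being the horizontal part relative to the fixed $N$, needs no regularity; and $\wt{\mathscr{S}}_{2,N}$ makes sense because $\underset{\mathrm{anis}}{\varrho}$ again outputs a regular connection on which $\mathscr{S}$ may act.

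For part~\ref{functionals 1}, the two formulas for $\wt{\mathscr{S}}$ coincide by substituting $\Gamma=\overset{\mathrm{anis}}{\jmath}(\hat{\nabla})$ into the definition of the restriction. The invariance is the crux: under the intrinsic identification $\hat{\nabla}\equiv(\Gamma,\Delta)$ of Th.~\ref{cor linear intrinsic}~\ref{cor linear intrinsic 2} one has $\overset{\mathrm{anis}}{\jmath}(\hat{\nabla})=\Gamma$, whence $\wt{\mathscr{S}}[\hat{\nabla}]=\mathscr{S}[\underset{\mathrm{anis}}{\varrho}(\Gamma)]$ depends only on the first component. The point to verify carefully is that replacing $\Delta$ by $\Delta+\Delta'$ (within the invertibility-admissible set) leaves the first component equal to $\Gamma$. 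This holds because the nonlinear connection secretly used in the intrinsic correspondence is $N=\overset{\mathrm{nl}}{\imath_{\Can}}(\Gamma)$, built from $\Gamma$ alone and hence unchanged by $\Delta'$; the self-consistency established in \eqref{two nonlinear connections coincide} is exactly what guarantees that $\overset{\mathrm{anis}}{\jmath}$ still returns $\Gamma$. Therefore $\wt{\mathscr{S}}$ is invariant under $(\Gamma,\Delta)\mapsto(\Gamma,\Delta+\Delta')$.

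Part~\ref{functionals 2} is the same argument with the fixed auxiliary $N$ replacing the intrinsic one: under Prop.~\ref{prop linear}~\ref{prop linear 1}, $\overset{\mathrm{anis}}{\jmath_N}(\hat{\nabla})$ is precisely the first component $\Gamma$ of $\hat{\nabla}\equiv(\Gamma,\Delta)$, so both $\mathscr{S}_{2,N}=\mathscr{S}_0[\Gamma]$ and $\wt{\mathscr{S}}_{2,N}=\mathscr{S}[\underset{\mathrm{anis}}{\varrho}(\Gamma)]$ factor through the first projection and are invariant under arbitrary $\Delta'\in\mathrm{h}_{-1}\Ten^1_2$ --- now with no invertibility restriction, since the domain is all of $\mathscr{C}_{\mathrm{lin}}$. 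The one genuinely non-formal observation in the whole argument, and the step I would flag as the main (and essentially only) obstacle, is precisely this distinction between the two decompositions: in \ref{functionals 1} the splitting direction depends on $\hat{\nabla}$ through $\Gamma$, so one must confirm it is stable under $\Delta'$, whereas in \ref{functionals 2} it is fixed in advance; everything else is formal unwinding of definitions.
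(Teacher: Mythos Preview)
Your proposal is correct and follows essentially the same approach as the paper's proof, which simply observes that $\underset{\mathrm{anis}}{\varrho}\circ\overset{\mathrm{anis}}{\jmath}$ is the map $(\Gamma,\Delta)\mapsto(\Gamma,0)$ under the relevant identification, and that part~\ref{functionals 2} is formally identical. Your write-up is considerably more detailed --- in particular, your explicit flagging of the stability issue for the intrinsic decomposition (resolved via \eqref{two nonlinear connections coincide}) is a genuine clarification of a point the paper leaves implicit in the bijectivity of Th.~\ref{cor linear intrinsic}~\ref{cor linear intrinsic 2}.
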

\begin{proof}
	Everything here is obvious, considering that the injective $\underset{\mathrm{anis}}{\varrho}$ allows one to regard $\mathscr{C}_{\mathrm{anis}} $ as a subset of $\mathscr{C}^{\mathrm{reg}}_{\mathrm{lin}} $ and that $\overset{\mathrm{anis}}{\jmath}$ projects the latter onto the former. For \ref{functionals 1}, recall that the map $\underset{\mathrm{anis}}{\varrho}\circ\overset{\mathrm{anis}}{\jmath}$ destroys the residue as in \eqref{destroying defects}, i.e., it is $(\Gamma,\Delta)\mapsto(\Gamma,0)$. The proof of \ref{functionals 2} is formally identical.
\end{proof}
\begin{remark} \label{functionals otro}
	By the same considerations but now based on Cor. \ref{corollary connections} instead of the theory of \S \ref{section linear}, we get two analogous results for the transitions $\xymatrix{ 
		\mathscr{C}_{\mathrm{anis}}\ar@/^/[r] &  \mathscr{C}_{\mathrm{nl}}\ar@/^/[l]}$
	and $\xymatrix{ 
		  \mathscr{C}_{\mathrm{nl}}\ar@/^/[r] & \mathscr{C}_{\mathrm{spr}}.\ar@/^/[l]}
	$ Their statements are obtained by taking Th. \ref{functionals}, except for its item \ref{functionals 2}, and doing the some replacements. For instance, for the first result, one replaces $(\mathscr{C}^{\mathrm{reg}}_{\mathrm{lin}} ,\mathscr{C}_{\mathrm{anis}} )$ by $(\mathscr{C}_{\mathrm{anis}} ,\mathscr{C}_{\mathrm{nl}} )$;  $(\Gamma,\underset{\mathrm{anis}}{\varrho})$ by $(N,\underset{\mathrm{nl}}{\vd})$; $(\hat{\nabla},\overset{\mathrm{anis}}{\jmath})$ by $(\Gamma,\overset{\mathrm{nl}}{\imath_{\Can}})$; and the last paragraph of Th. \ref{functionals} \ref{functionals 1} by the following: ``\emph{Under \eqref{referencia descomposicion}, this becomes a functional on $\mathscr{C}_{\mathrm{nl}} \times\Ker(\overset{1}{\imath_{\Can}})$ invariant to the maps $(N,\Delta)\mapsto(N,\Delta+\Delta^\prime)$ for every $\Delta^\prime\in \Ker(\overset{1}{\imath_{\Can}})$}".

\end{remark}

	As a conclusion, by combining Th. \ref{functionals} and Rem. \ref{functionals otro}, one can take any functional defined on any level of the ladder \eqref{ladder connections 2} or \eqref{ladder connections linear} and redefine it to live on any other level. Each time that one lowers the level, one obtains a modification of the original functional as in Th. \ref{functionals} \ref{functionals 1} (say, a sort of ``gauge symmetrization"). Consequently, the corresponding class of variations effectively gets reduced. On the opposite extreme, each time one raises the level, more degrees of freedom and variations are permitted. This may make that some critical points for variations on the lower level stop being critical for the new variations.
	
This viewpoint provides at least an heuristic explanation for results such as the final one of \cite{GPMin}. There, the only vacuum solutions of the theory are necessarily classical Lorentzian metrics; in particular, non-quadratic Lorentzian norms \cite[Def. 3.1]{JS19} would not be interpreted as vacuum states. (These norms would not have critical properties as strong as those of a scalar product.) Our approach suggests that these norms will naturally be such vacuum solutions if, instead, one stays at one of the lower levels $\mathscr{C}_{\mathrm{anis}} $, $\mathscr{C}_{\mathrm{nl}} $ or $\mathscr{C}_{\mathrm{spr}} $ of \eqref{ladder connections 2}.

\bibliography{sn-bibliography}

\end{document}